\newcommand{\R}{\mathbb{R}}
\newcommand{\N}{\mathbb{N}}
\newtheorem{properties}{Properties}
\begin{document}

\title{Positive pseudo almost periodic solutions to a class of hematopoiesis model: Oscillations and Dynamics}


\author{ Haifa Ben Fredj \and   Farouk Chérif}

\institute{ Haifa Ben Fredj \at
             MaPSFA, ESSTHS, University of Sousse-Tunisia    \\   
                 Tel.: +21693088956\\      
              \email{haifabenfredjh@gmail.com}  
           \and
           Farouk Chérif \at
          ISSATs and Laboratory of Mathematical Physic, Specials Functions and Applications, LR11ES35, Ecole Supérieure des Sciences et de Technologie de Hammam-Sousse, University of  Sousse, Sousse, Tunisia \\
              Tel.: +21698413089\\
              \email{faroukcheriff@yahoo.fr}           
}

\date{Received: date / Accepted: date}

\maketitle

\begin{abstract}
This paper presents a new generalized Mackey-Glass model with a non-linear harvesting term and mixed delays. The main purpose of this work is to study the existence and the exponential stability of the pseudo almost periodic  solution for the considered model. By using fixed point theorem and under suitable Lyapunov functional, sufficient conditions are given to study the pseudo almost periodic solution for the considered model. Moreover, an illustrative example is given to demonstrate the effectiveness of the obtained results.
\keywords{almost periodic\and pseudo almost periodic \and delays \and hematopoiesis's model \and global attractor }
\end{abstract}

\section{Introduction}
\label{intro}
In 1977, Mackey and Glass \cite{L} proposed the following non-linear differential equation with constant delay
\begin{equation}
x'(t) = -\alpha x(t) +  \dfrac{\beta x(t-\tau)}{\theta^n +  x^n(t-\tau)},\quad 0 <n.
\end{equation}
in order to describe the concentration of mature cells in the blood concentration. Here $\alpha$, $\beta$, $\tau$ and $\theta$ are positive constants, the unknown $x$ stands for the density of mature cells in blood circulation, $\alpha$ is the rate of lost cells from the circulation at time t, the flux \begin{center}
$f(x(t-\tau)):= \dfrac{\beta x(t-\tau)}{\theta^n+ x^n(t-\tau)}$\end{center}
of cells in the circulation depends on $x(t - \tau)$ at the time $t -\tau$, where $\tau$ is the time delay between the
production of immature cells in the bone marrow and their maturation.\\
\\

Since its introduction in the literature, the hematopoiesis model has gained a lot of attention and various extensions.
Hence, under some additional conditions some authors  \cite{K1,K2,M1,N1} considered  an extended version of eq.(1) and obtained  the existence and attractivity of the unique positive periodic and almost periodic solutions  of the following model
 \begin{equation}
 x(t)= {\displaystyle -a(t) x(t) +\sum^{N}_{i=1} \dfrac{b_i(t)x^m(t-\tau_i(t))}{1 + x^n(t-\tau_i(t))}},\quad 0\leq m\leq 1, 0<n.
 \end{equation}

Recently, there have been extensive and valuable contributions dealing with oscillations of  the hematopoiesis model with and without delays, see, e.g., \cite{K1,B,K2,O,H} and references therein.\\
Also, the stability of various models was strongly investigated by many authors recently \cite{M,A,M1,F,B1,N} and references therein.\\
\\
As we all know, in real-world applications equations with a harvesting term provide generally a more realistic and reasonable description for models of mathematical biology and in particular the population dynamics. Hence, the investigation of biological dynamics with harvesting is a meaningful
subject in the exploitation of biological resources which is related to the optimal management of renewable resources \cite{K,P}.\\
\\
Besides, the study oscillations and dynamics systems of biological origin is an exciting topic. One can find a valuable results in this field \cite{H1,H2,B0,M0,M2} and references therein.\\
\\
Motivated by the discussion above the main subject  of this paper is to study the existence and the global attractor
of the unique and positive  pseudo almost periodic solution for the generalized Mackey-Glass model with a nonlinear harvesting term and mixed delays. Roughly speaking, we shall consider the following hematopoiesis model
\begin{eqnarray}
x'(t)=-a(t)x(t)+\sum^{N}_{i=1} \dfrac{b_i(t)x^m(t-\tau_i(t))}{1 + x^n(t-\tau_i(t))}- H(t,x(t-\sigma(t)), \quad  1<m \leq n, t\in \R.
\end{eqnarray}
However, to the author's best knowledge, there are no publications considering the pseudo and positive almost periodic solutions for Mackey-Glass model with harvesting term and $1<m\leq n$.\\
\\
The remainder of this paper is organized as follows: In Section 1, we will introduce some necessary notations, definitions and fundamental properties of the space PAP($\R$,$\R^+$) which will be used in the paper. In Section 2, the model is given. In section 3, 
the existence  of the unique positive pseudo almost periodic solution for the considered system is established. Section 4 is devoted to the stability of the pseudo almost periodic solution. In Section 5, based on suitable Lyapunov function and Dini derivative, we
give some sufficient conditions to ensure that all solutions converge exponentially to the positive pseudo almost periodic solution of the equation (3). At last, an illustrative example is given. It should be mentioned that the main results of this paper are theorems 1, 2.
\section{Preliminaries}
\label{sec:1}
In this section, we would like to recall some basic definitions and lemmas which are used in what follows. In this paper, $BC(\R, \R)$ denotes the set of bounded continued functions from $\R$ to $\R$. Note that $(BC(\R, \R), \|.\|_{\infty})$ is a Banach space where the sup norm $$\|f\|_{\infty} :=\underset{t\in \R}{sup} |f(t)|.$$
\begin{definition} Let u(.) $\in BC(\R, \R)$, u(.) is said to be almost periodic (a.p) on $\R$ if, for any $\epsilon > 0$, the set
$$T(u, \epsilon) = \{\delta; |u(t + \delta) - u(t)| < \epsilon, \text{ for all }t \in \R\}$$ is
relatively dense; that is, for any $\epsilon > 0$, it is possible to find a real number $l = l(\epsilon) > 0$; for any interval with length l($\epsilon$), there exists a number $\delta= \delta(\epsilon)$ in this interval such, that $$|u(t + \delta) - u(t)| <\epsilon, \text{ for all }t \in \R.$$
\end{definition}
We denote by $AP(\R, \R)$ the set of the almost periodic functions from $\R$ to $\R$.
\begin{remark}
Let $u_i (.)$ , $1 \leq i \leq m$ denote almost periodic functions and
$\epsilon > 0$ be an arbitrary real number. Then there exists a positive real number
$L = L(\epsilon)> 0$ such that every interval of length L contains at least one common
$\epsilon-almost$ period of the family of functions $ u_i (.)$, $1 \leq i \leq m.$
\end{remark}
 Besides, the concept of pseudo almost periodicity (p.a.p) was introduced by Zhang \cite{D} in the early nineties. It is a natural generalization of the classical almost periodicity. Precisely, define the class of functions $PAP_0(\R, \R)$ as follows
$$\bigg\{f \in BC(\R, \R); \underset{T\rightarrow+\infty}{lim} \dfrac{1}{2T} \int_{-T}^T |f(t)|dt = 0\bigg\} .$$
A function $f \in BC(\R, \R)$ is called pseudo almost periodic if it can be expressed as $f = h + \phi$, where $h \in AP(\R, \R)$
and $\phi \in PAP_0(\R, \R)$. The collection of such functions will be denoted by $PAP(\R, \R)$. The functions h and $\phi$ in the
above definition are, respectively, called the almost periodic component and the ergodic perturbation of the pseudo almost periodic function $f$. The decomposition given in definition above is unique. It should be mentioned that pseudo almost periodic functions
possess many interesting properties; we shall need only a few of them and for the proofs we shall refer to \cite{D,D1,M3}.
\begin{remark}
Observe that (PAP($\R$, $\R$), $\|.\|_{\infty}$) is a Banach
space and $AP(\R, \R)$ is a proper subspace of $PAP(\R, \R)$ since the function $\psi(t) = cos(2 t) +sin(2 \sqrt{5}t) +exp^{-t^2 |sin (t)|}$ is pseudo almost periodic function but not almost periodic.
\end{remark}
\begin{properties}\cite{D} If f,g $\in PAP(\R,\R)$, then the following assertions hold:
\begin{enumerate}
\item[(a)] f.g, f+g $\in PAP(\R, \R)$.
\item[(b)] $\dfrac{f}{g} \in PAP(\R, \R)$, if $\underset{t\in \R}{inf}|g(t)|>0$.
\end{enumerate}
\end{properties}
\begin{definition}\cite{D}
 Let $\Omega \subseteq \R$ and let K be any compact subset of $\Omega$. On define the class of functions \\$PAP_0(\Omega\times\R, \R)$ as follows
 $$\bigg\{\psi\in C(\Omega\times \R; \R); \underset{T\rightarrow+\infty}{lim} \dfrac{1}{2T} \int_{-T}^T |\psi(s,t)|dt = 0\bigg\}$$
 uniformly with respect to $s\in K$.
\end{definition}
\begin{definition} (Definition 2.12, \cite{E}) \item  Let $\Omega \subseteq \R$. An continuous function f : $\R \times\Omega \longrightarrow \R$  is called pseudo almost periodic (p.a.p).
in t uniformly with respect $x \in \Omega$ if the two following conditions are satisfied :\\
i) $\forall x \in  \Omega, f(., x) \in PAP(\R,\R),$\\
ii) for all compact K of $\Omega$, $\forall \epsilon> 0, \exists \delta > 0, \forall t \in \mathbb{R}, \forall x_1, x_2 \in K$,
\begin{center}
$|x_1 - x_2| \leq \delta \Rightarrow  |f(t, x_1) - f(t, x_2)| \leq \epsilon$.
\end{center}
Denote by $PAP_U(\Omega\times \R; \R)$  the set of all such functions.
\end{definition}

\section{The model}
\label{sec:2}
In order to generalize and improve the above models, let us consider the following Mackey-Glass model with a non-linear harvesting term and several concentrated delays
\begin{eqnarray}
x'(t)=-a(t)x(t)+\sum^{N}_{i=1} \dfrac{b_i(t)x^m(t-\tau_i(t))}{1 + x^n(t-\tau_i(t))}- H(t,x(t-\sigma(t))
\end{eqnarray}
where $t \in \R$ and 
 \begin{enumerate}
 \item[$\blacktriangleright$]The function a : $\mathbb{R}\longrightarrow\mathbb{R^+}$ is pseudo almost periodic(p.a.p) and $\underset{t\in \mathbb{R}}{inf}  a(t) >0.$
 \item[$\blacktriangleright$] For all 1$\leq i \leq N$; the functions $\tau_i,\sigma$, b  : $\mathbb{R}\longrightarrow\mathbb{R^+}$ are p.a.p. 
\item[$\blacktriangleright$] The term H $\in   PAP_U(\mathbb{R}\times\mathbb{R},\R^+$)  satisfies the Lipschitz condition : $\exists L_H > 0$ such that
$${\displaystyle \mid H(t,x)-H(t,y) \mid < L_H \mid x-y \mid,\quad \forall x,y,t \in \mathbb{R}}.$$
\end{enumerate}
 Throughout the rest of this paper, for every bounded function $f : \R \rightarrow \R$, we denote
$$f^+ = \underset {t\in \R}{sup} f(t), f^- =\underset {t\in \R}{inf} f(t).$$ Pose $r =\underset{t \in \mathbb{R}}{sup} \bigg(\tau_i(t),\sigma(t) ; i=1,2...N\bigg).$
 Denote by $BC ([-r, 0] , \R^+$) the set of bounded continuous functions from [-r, 0] to $\R^+$. If $x(.)$ is defined on $[-r + t_0, \sigma[$ with $t_0, \sigma \in \R$, then we define $x_t \in C([-r, 0] , \R$) where $x_t(\theta) = x(t + \theta)$ for all $\theta \in [-r, 0]$. Notice that we restrict our selves to $\R^+$-valued functions since only non-negative solutions of (4) are biologically meaningful. So, let us consider the following initial condition
\begin{equation}
 x_{t_0} = \phi,\quad \phi \in BC ([-r, 0] , \R^+) \text{ and }\phi (0) > 0. 
 \end{equation}
We write $x_t (t_0, \phi)$ for a solution of the admissible initial value problem (4) and (5). Also, let $[t_0, \eta(\phi)[$ be the maximal right-interval of existence of $x_t(t_0, \phi)$.

\section{Main results}
\label{sec:3}
As pointed out in the introduction, we shall give here sufficient conditions which ensures existence and uniqueness of pseudo almost periodic solution of (4). In order to prove this result, we will state the following lemmas. For simplicity, we denote $x_t(t_0, \phi)$ by $x(t)$ for all $t\in \R$.
\begin{proposition}
A positive solution $x(.)$ of model (4)-(5) is bounded on $[t_0, \eta(\phi)[$, and $\eta(\phi)=+\infty$.
\end{proposition}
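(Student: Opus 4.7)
The plan is to produce a uniform a priori upper bound on the positive solution $x(t)$ and then invoke the standard continuation principle for delay differential equations. The key preliminary observation is that the Hill-type nonlinearity
\[
f(y) := \frac{y^m}{1+y^n}, \qquad y \geq 0,
\]
is bounded on $[0,\infty)$ whenever $1 < m \leq n$: when $m=n$ one has the trivial bound $f \leq 1$, while for $m<n$ a short optimisation at the unique interior critical point $y^\ast = (m/(n-m))^{1/n}$ yields a finite constant $M := \sup_{y \geq 0} f(y)$ depending only on $m$ and $n$. Consequently each summand of the nonlocal term in (4) is controlled by $b_i(t)\,M$, regardless of the past values of $x$.

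Next, I would exploit the positivity of $x$ together with the hypothesis $H \in PAP_U(\R\times\R,\R^+)$, which in particular forces $H \geq 0$, to discard the harvesting term from the right-hand side of (4) when seeking an upper estimate. Combining this with $a(t) \geq a^- > 0$ and $b_i(t) \leq b_i^+$, one arrives at the scalar differential inequality
\[
x'(t) \leq -a^{-}\,x(t) + C, \qquad C := M\sum_{i=1}^{N} b_i^{+},
\]
valid on $[t_0,\eta(\phi))$. A direct integration (equivalently, comparison with the linear ODE $y' = -a^{-}y + C$) then produces the explicit bound
\[
0 < x(t) \leq \phi(0)\,e^{-a^{-}(t-t_0)} + \tfrac{C}{a^{-}}\bigl(1-e^{-a^{-}(t-t_0)}\bigr) \leq \max\!\bigl(\phi(0),\,C/a^{-}\bigr),
\]
which is uniform in $t$ on the maximal interval of existence.

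Finally, since $x$ stays inside a bounded subset of $\R^{+}$, the standard continuation theorem for functional differential equations excludes finite-time blow-up and forces $\eta(\phi) = +\infty$, giving both halves of the statement at once. The delicate point I anticipate is the genuine global validity of the differential inequality on $[t_0,\eta(\phi))$: one must invoke the assumed positivity of $x$ and the non-negativity of $H$ at \emph{every} $t$ where the solution is defined, rather than only at the initial time. Once this is in hand, the remainder of the argument reduces to a routine Gronwall-type estimate plus the classical DDE extension theorem, so I expect no further obstacles.
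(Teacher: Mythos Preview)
Your argument is correct and follows essentially the same route as the paper: bound the Hill term uniformly (the paper simply uses $\sup_{y\ge 0} y^m/(1+y^n)\le 1$ for $1<m\le n$), drop the nonnegative harvesting term, compare with the linear equation driven by $-a^- x + \text{const}$, and then invoke the continuation theorem of Hale--Verduyn~Lunel. The only cosmetic difference is that the paper writes the variation-of-constants formula directly rather than passing through the differential inequality, arriving at the slightly looser bound $\phi(0)+\tfrac{1}{a^-}\sum_i b_i^+$ in place of your $\max\bigl(\phi(0),\,C/a^-\bigr)$.
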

\begin{proof}\item
 \text{  }   We have for each $t \in [t_0, \eta(\phi)[$ the solution verifies
$$x(t)= {\displaystyle e^{- \int^t_{t_0} a(u) du} \phi(0) +\int^t_{t_0} e^{-\int^t_{s} a(u) du}\bigg[\sum^{N}_{i=1} \dfrac{b_i(s)x^m(s-\tau_i(s))}{1 + x^n(s-\tau_i(s))}- H(t,x(s-\sigma(s))\bigg]ds}.$$
So, by $\underset{x\geq0}{sup}\dfrac{x^m}{1+x^n}\leq 1,\text{ }\forall 1<m\leq n$, we obtain 
$$\begin{array}{lll}
x(t)\leq {\displaystyle \phi(0) +\int^t_{t_0}  e^{-a^-(t-s)}\sum^{N}_{i=1} b_i^+ds}&\leq {\displaystyle \phi(0) + \dfrac{1}{a^-}[1- e^{-a^-(t-t_0)}]\sum^{N}_{i=1} b_i^+}\\
&\leq  \phi(0) + \dfrac{1}{a^-}\sum^{N}_{i=1} b_i^+< +\infty,
\end{array}$$
which proves that $x(.)$ is bounded. The second part of the conclusion is given by Thorem 2.3.1 in \cite{I}, we have that $\eta(\phi)=+\infty$.
\end{proof}

 \begin{proposition}\item
If  $a^->\sum^N_{i=1}b_i^+,$  then  each positive solution $x_t(t_0,\phi)$  of model (4)-(5)  satisfies 
$$x(t) \underset{t \rightarrow +\infty}{\longrightarrow} 0.$$ 
\end{proposition}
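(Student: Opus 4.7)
The plan is to reduce (4) to a linear delay differential inequality and then run a standard $\limsup$ argument driven by the hypothesis $a^- > \sum_{i=1}^{N} b_i^+$.

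First I would note that, because $H$ is $\R^+$-valued, the harvesting contributes a non-positive term, and I would establish the pointwise inequality
$$\frac{x^m}{1+x^n} \leq x,\qquad \forall\, x \geq 0,$$
valid under the standing assumption $1 < m \leq n$. This is proved by splitting at $x=1$: for $0 \leq x \leq 1$, since $m>1$ one has $x^m \leq x$, so $\frac{x^m}{1+x^n} \leq x^m \leq x$; for $x \geq 1$, since $m \leq n$ one has $\frac{x^m}{1+x^n} \leq \frac{x^m}{x^n} = x^{m-n} \leq 1 \leq x$. Combined with $H \geq 0$, equation (4) then yields
$$x'(t) \leq -a^{-} x(t) + \sum_{i=1}^{N} b_{i}^{+}\, x\!\left(t-\tau_i(t)\right), \qquad t \geq t_0.$$

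Next, by Proposition 1 the positive solution $x(\cdot)$ is bounded on $[t_0,+\infty)$, so $M := \limsup_{t\to +\infty} x(t) \in [0,+\infty)$. Fix $\varepsilon>0$ and pick $T \geq t_0$ such that $x(s) \leq M+\varepsilon$ for every $s \geq T - r$. Since $t-\tau_i(t) \geq t-r \geq T-r$ for $t \geq T$, the inequality above becomes
$$x'(t) \leq -a^{-} x(t) + \Bigl(\sum_{i=1}^{N} b_{i}^{+}\Bigr)(M+\varepsilon), \qquad t \geq T,$$
and the variation-of-constants formula produces
$$x(t) \leq x(T)\, e^{-a^{-}(t-T)} + \frac{\sum_{i=1}^{N} b_{i}^{+}}{a^{-}}\,(M+\varepsilon)\,\bigl(1 - e^{-a^{-}(t-T)}\bigr).$$
Taking $\limsup_{t\to +\infty}$ then letting $\varepsilon \to 0^{+}$ gives $\left(1-\frac{\sum_{i=1}^{N} b_{i}^{+}}{a^{-}}\right)M \leq 0$. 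The hypothesis $a^- > \sum_{i=1}^{N} b_i^+$ makes the bracket strictly positive, which together with $M \geq 0$ forces $M=0$, i.e.\ $x(t)\to 0$.

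The main obstacle is the sharp pointwise bound $\frac{x^m}{1+x^n}\leq x$: so far the paper has only extracted $\frac{x^m}{1+x^n} \leq 1$, which is enough for boundedness but not for decay. Extracting a linear factor $x$ is exactly what the assumption $m>1$ buys us (for $m\leq 1$ the nonlinearity is strictly super-proportional near $0$ and the argument would break). Once this super-linear-at-the-origin estimate is in hand, the rest is a routine linear delay comparison.
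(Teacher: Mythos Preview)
Your argument is correct, but it follows a different line from the paper's own proof. Both approaches begin by discarding the non-positive harvesting term and invoking the key pointwise estimate $\dfrac{x^m}{1+x^n}\le x$ for $x\ge 0$ when $1<m\le n$ (which you prove explicitly, while the paper simply states it), thereby reducing (4) to the linear delay inequality $x'(t)\le -a^-x(t)+\sum_i b_i^+\,x(t-\tau_i(t))$. From there the paper introduces an exponential weight: it chooses $\lambda>0$ with $\lambda-a^-+\sum_i b_i^+ e^{\lambda r}<0$, sets $W(t)=x(t)e^{\lambda t}$, and runs a Halanay-type first-crossing contradiction to show that $W$ remains below a constant $Q$, hence $x(t)<Qe^{-\lambda t}\to 0$. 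You instead take the $\limsup$ route: bound the delayed terms by $M+\varepsilon$, apply variation of constants, and close the loop to force $M=0$. Your method is shorter and avoids choosing an auxiliary rate, but yields only $x(t)\to 0$; the paper's Lyapunov argument is slightly heavier yet delivers an explicit exponential decay rate~$\lambda$, which is a stronger conclusion consistent with the exponential-stability theme of the article.
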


\begin{proof}\item
     \text{  }   We define the continuous function
\begin{eqnarray*}G : [0,1]
&\longrightarrow& \R\\
y &\longmapsto& y-a^-+\sum^N_{i=1}b_i^+ e^{y t}.\end{eqnarray*}
From the hypothesis, we obtain G(0)<0, then there exists $\lambda \in [0,1]$, where  $$G(\lambda)<0. \qquad (C.1)$$
 Let us consider the function $W(t)=x(t)e^{\lambda t}$. Calculating the left derivative $W(.)$ and by using the following inequality
$$\dfrac{x^m}{1+x^n} \leq x ,\quad \forall 1<m\leq n. $$We  obtain

  \begin{eqnarray*}
    D^-W(t) &=& \lambda x(t)e^{\lambda t}+x'(t)e^{\lambda t}\\
    \\
    &=&\lambda x(t) e^{\lambda t}+e^{\lambda t}[-a(t)x(t)+\sum_{i=1}^{N} b_i(t) \frac{x^m(t-\tau_i(t))}{1+x^n(t-\tau_i(t))}-H(t,x(t-\sigma(t))]\\
  \\  &\leq& e^{\lambda t}((\lambda- a^-)x(t) + \sum_{i=1}^{N} b^+_i x(t-\tau_i(t))).
  \end{eqnarray*}
  Let us prove that
  \begin{eqnarray*}
    W(t) &=& x(t)e^{\lambda t}< e^{\lambda t_0} M = Q, \forall t\geq t_0.
  \end{eqnarray*}
Suppose that there exists $t_1>t_0$ such that
  \begin{eqnarray*}
    W(t_1) &=& Q, W(t)<Q, \text{ for all } t_0-r\leq t< t_1.
  \end{eqnarray*}
Then
  \begin{eqnarray*}
   0\leq D^-W(t_1)&\leq&(\lambda- a^-)x(t_1)  e^{\lambda t_1}+ \sum_{i=1}^{N} b^+_i x(t_1-\tau_i(t_1))e^{\lambda t_1}  \\
   \\ &\leq& (\lambda- a^-) Q+ \sum_{i=1}^{N} b^+_i  x(t_1-\tau_i(t_1))e^{\lambda t_1} e^{\lambda \tau_i} e^{-(\lambda \tau_i(t_1))}\\
   \\ &=&(\lambda- a^-) Q+ \sum_{i=1}^{N} b^+_i  x(t_1-\tau_i(t_1)) e^{\lambda (t_1-\tau_i(t_1))} e^{\lambda r}\\
    \\&\leq& [\lambda- a^-+ \sum_{i=1}^{N} b^+_i  e^{\lambda r}] Q\\
    \\&<& 0 \qquad (\text{by \textbf{(C.1)}})
  \end{eqnarray*}

which is a contradiction. 
Consequently, $x(t) e^{\lambda t}< Q$. Then, $x(t)< e^{-\lambda t}Q \underset{t \rightarrow +\infty}{\longrightarrow} 0$. 
\end{proof}

\begin{remark} Pose $f_{n,m}(u) =\dfrac{u^m}{1+u^n}$, one can get:\\
\text{\quad  if $m<n$}:
\[
\left  \{ 
\begin{array}{r c l}
f_{n,m}'(u) = \dfrac{u^{m-1}(m-(n-m)u^n)}{(1 + u^n)^2}> 0,\forall u \in \left[ 0,\sqrt[n]{\dfrac{m}{n-m} }\right] \qquad (C.3)\\
\\f_{n,m}'(u) =  \dfrac{u^{m-1}(m-(n- m)u^n)}{(1 + u^n)^2}<0,\forall u \in \left] \sqrt[n]{\dfrac{m}{ n-m} },+\infty \right[ \qquad (C.4),
\end{array}
\right.
 \]\\

 and \text{if } $m= n$:
 $$f_{n,m}'(u) = \dfrac{u^{m-1}m}{(1 + u^m)^2}> 0,\forall u \in [ 0,+\infty[.\qquad  (C.5)$$

If $m<n$, one can choose  $k \in \left]0,  \sqrt[n]{\dfrac{m}{ n-m} }\right[ $ and combining with \textbf{(C.3)} and \textbf{(C.4)}  there exists a constant \\ $\overset{\backsim}{ k}>\sqrt[n]{\dfrac{m}{ n-m} } $ such that 
\begin{center}
$f_{n,m}(k)= f_{n,m}(\overset{\backsim}{k}).$ \qquad (C.6)
\end{center}
Moreover,  $$\underset{u\geq0}{sup}\dfrac{u^m}{1+u^n}\leq 1, \quad \forall 1<m\leq n. \qquad (C.7)$$\\
\end{remark}

  $$\text{Let }\mathcal{C}^0=\{\psi \in BC([-r,0] , \mathbb{R}^+); k\leq \psi \leq M\}.$$
\begin{definition}A positive solution $ x(.)$ of the differential equation is permanent if there exists $t^*\geq 0$, A and B ; $B > A > 0$ such that
$$A \leq x(t) \leq B \quad \text{ for }t \geq t^*.$$
\end{definition}
 \begin{lemma}
 Suppose that there exist a two positives constants M and k satisfying:

 \begin{enumerate}
 \item[[H1]]
 \text{If} m<n:
 $$0<k < \sqrt[n]{\dfrac{m}{ n-m} } < M \leq \overset{\backsim}{k}\quad (\text {$\overset{\backsim}{k}$ was given by \textbf{(C.6)}})$$
 \text{If $ m= n$ we have:}
 $$0<k < M$$

\item[[H2]] ${\displaystyle -a^- M+\sum^N_{i=1}b_i^+ - H^-}<0$
\item[[H3]] ${\displaystyle -a^+ k+\sum^N_{i=1}b_i^- \dfrac{k^m}{1+k^n} -H^+}> 0$
\end{enumerate}
and $\phi \in \mathcal{C}^0$, then the solution of (4)-(5) $ x(.)$ is permanent  which $\eta(\phi)=+\infty$.

\end{lemma}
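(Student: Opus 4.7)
My plan is to establish the two-sided bound $k \leq x(t) \leq M$ for every $t \geq t_{0}$ by two independent contradiction arguments, one for each inequality. Since $\phi \in \mathcal{C}^{0}$ forces $k \leq x(s) \leq M$ on $[t_{0}-r,t_{0}]$, continuity of $x(\cdot)$ ensures that any violation of either bound produces a first crossing time $t_{1}>t_{0}$, at which the sign of the one-sided derivative can be compared with the right-hand side of $(4)$. Once the bound $k\leq x(t)\leq M$ is in hand, the solution is a priori bounded on $[t_{0},\eta(\phi))$, so by Proposition~1 (equivalently, by the standard continuation theorem invoked there) we get $\eta(\phi)=+\infty$, and permanence follows with $A=k$, $B=M$, $t^{\ast}=t_{0}$.

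For the upper bound, suppose there exists a first $t_{1}>t_{0}$ with $x(t_{1})=M$ and $x(s)\leq M$ on $[t_{0}-r,t_{1}]$. Necessarily $D^{-}x(t_{1})\geq 0$. On the other hand, using $a(t_{1})\geq a^{-}$, the universal bound $(C.7)$ giving $f_{n,m}(x(t_{1}-\tau_{i}(t_{1})))\leq 1$, and $H(t_{1},\cdot)\geq H^{-}$, a direct estimate from $(4)$ yields
\[
D^{-}x(t_{1})\;\leq\;-a^{-}M+\sum_{i=1}^{N}b_{i}^{+}-H^{-}\;<\;0
\]
by hypothesis [H2], a contradiction. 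Hence $x(t)\leq M$ for all $t\geq t_{0}$.

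For the lower bound, assume for contradiction that there is a first $t_{1}>t_{0}$ with $x(t_{1})=k$ and $x(s)\geq k$ on $[t_{0}-r,t_{1}]$, so that $D^{-}x(t_{1})\leq 0$. The main subtlety is the non-monotonicity of $f_{n,m}$ when $m<n$: here I invoke [H1] together with $(C.3)$--$(C.6)$, which guarantee that on the interval $[k,\widetilde{k}]$ one has $f_{n,m}(u)\geq f_{n,m}(k)=k^{m}/(1+k^{n})$. Since $k\leq x(s-\tau_{i}(s))\leq M\leq \widetilde{k}$ on the relevant interval (already established by the previous step and by $\phi\in\mathcal{C}^{0}$), this lower estimate for $f_{n,m}$ applies at $t_{1}$; in the case $m=n$ the same inequality follows immediately from the monotonicity given in $(C.5)$. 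Combining with $a(t_{1})\leq a^{+}$, $b_{i}(t_{1})\geq b_{i}^{-}$, and $H(t_{1},\cdot)\leq H^{+}$ gives
\[
D^{-}x(t_{1})\;\geq\;-a^{+}k+\sum_{i=1}^{N}b_{i}^{-}\frac{k^{m}}{1+k^{n}}-H^{+}\;>\;0
\]
by [H3], contradicting $D^{-}x(t_{1})\leq 0$. I expect the main obstacle to be precisely this step: reconciling the non-monotonic behaviour of $f_{n,m}$ with a uniform lower bound on its values, which is exactly the purpose of the choice $M\leq \widetilde{k}$ encoded in [H1] via $(C.6)$. Once both bounds are proven, the conclusion of the lemma, together with global existence via Proposition~1, is immediate.
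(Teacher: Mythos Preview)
Your proof is correct and follows essentially the same approach as the paper: two first-crossing-time contradiction arguments, one for each bound, followed by the continuation theorem for global existence. You are in fact more explicit than the paper on the key subtlety---the paper's lower-bound step cites only $(C.3)$ and $(C.5)$, whereas for $m<n$ with $x(t_{2}-\tau_{i}(t_{2}))$ possibly to the right of the peak of $f_{n,m}$ one genuinely needs $(C.6)$ together with the already-established upper bound $x\leq M\leq\widetilde{k}$, exactly as you spell out.
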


\begin{proof} \item

\qquad Actually, we prove that $x(.)$ is bounded in $[ t_0,\eta(\phi)[ .$ \\

  $\bullet$ First, we claim that 
  
$$ x(t) < M, \forall  t \in [ t_0,\eta(\phi)[ .\qquad (i)$$

Contrarily, there exists $t_1 \in ] t_0,\eta(\phi)[ $ such that:
\[ 
 \left \{
 \begin{array}{llll}
x(t)<M, \forall t \in  \left[t_0-r,t_1\right[\\
x(t_1) = M
 \end{array} 
 \right. 
 \]

Calculating the right derivative of $x(.)$ and by $\textbf{(H2)}$ and \textbf{(C.7)}, we obtain 
$$
\begin{array}{ll}
0 \leq x'(t_1)&=-a(t_1) x(t_1)+ \sum^{N}_{i=1} \dfrac{b_i(t_1)x^m(t_1-\tau_i(t_1))}{1 + x^n(t_1-\tau_i(t_1))}- H(t_1,x(t_1-\sigma(t_1))\\
\\&\leq  -a(t_1) M +\sum^{N}_{i=1} b_i(t_1) -H^-   \\
\\&< -a^- M +\sum^{N}_{i=1} b_i^+-H^-\\
\\& < 0,
\end{array}
$$
which is a contradiction. So it implies that (i) holds.\\
\\
$\bullet $ Next, we prove that 
\begin{center}
$k < x(t) ,\forall t \in  [ t_0,\eta(\phi)[ .$  \qquad (ii)
\end{center}
Otherwise, there exists $t_2 \in ] t_0,\eta(\phi)[ $ such that 
\[ 
 \left \{
 \begin{array}{llll}
x(t)>k, \forall t \in  \left[t_0-r,t_2\right[\\
x(t_2) = k
 \end{array} 
 \right. 
 \]

 Calculating the right derivative of $x(.)$ and combining with $\textbf{(H3)}$, \textbf{(C.3)} and\textbf{(C.5)}, we obtain 
 $$
\begin{array}{ll}
0 \geq  x'(t_2)&=-a(t_2) x(t_2)+ \sum^{N}_{i=1} \dfrac{b_i(t_2)x^m(t_2-\tau_i(t_2))}{1 + x^n(t_2-\tau_i(t_2))}- H(t_2,x(t_2-\sigma(t_2))\\
\\& \geq -a(t_2) k + \sum^{N}_{i=1} \dfrac{b_i(t_2) k^m}{1 +k^n}-H^+\\
\\& >  -a^+ k + \sum^{N}_{i=1} \dfrac{b_i^- k^m}{1 +k^n}-H^+\\
\\& > 0,\\
\end{array} 
$$
 which is a contradiction and consequnetly (ii) holds. From Thorem 2.3.1 in \cite{I}, we have that $\eta(\phi)=+\infty$. The proof of Lemma 4.4 is now completed.
\end{proof}

$$\text{Let }\mathcal{B}=\{\psi \in PAP(\mathbb{R} , \mathbb{R}); k\leq \psi \leq M\}.$$
 \begin{lemma} 
 $\mathcal{B}$ is a closed subset of $PAP(\R,\R)$.
 \end{lemma}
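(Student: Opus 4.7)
The plan is to show $\mathcal{B}$ is closed by a direct sequential-closure argument using the fact (noted in Remark 3) that $(PAP(\R,\R),\|\cdot\|_\infty)$ is a Banach space, so convergence in $\mathcal{B}$ is uniform convergence.

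First I would take an arbitrary sequence $(\psi_n)_{n\in\N}\subset\mathcal{B}$ that converges to some limit $\psi\in PAP(\R,\R)$ with respect to the norm $\|\cdot\|_\infty$. Since $\psi\in PAP(\R,\R)$ already by hypothesis (we're checking closedness inside this Banach space), the only thing to verify is the pointwise bound $k\le \psi(t)\le M$ for every $t\in\R$.

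Next I would observe that $\|\psi_n-\psi\|_\infty\to 0$ implies in particular the pointwise convergence $\psi_n(t)\to\psi(t)$ for each fixed $t\in\R$. Since $k\le\psi_n(t)\le M$ for all $n$ by definition of $\mathcal{B}$, passing to the limit preserves these (non-strict) inequalities, yielding $k\le\psi(t)\le M$. Therefore $\psi\in\mathcal{B}$, which proves that $\mathcal{B}$ is sequentially closed, hence closed in the Banach space $PAP(\R,\R)$.

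There is essentially no obstacle here: the argument is just that a sup-norm limit preserves pointwise bounds. The only subtlety worth flagging is that $\mathcal{B}$ is defined via non-strict inequalities $k\le\psi\le M$, which is what makes the set closed; had the bounds been strict, the same argument would only give closure. Since the hypotheses of Lemma 4.4 use strict inequalities $k<x(t)<M$ but $\mathcal{B}$ is defined with non-strict inequalities, the closedness is consistent and the subsequent fixed-point argument (for which this lemma is evidently a preparatory step) can be carried out on the closed, bounded, convex subset $\mathcal{B}$.
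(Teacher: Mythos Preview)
Your proof is correct and follows essentially the same route as the paper: a sequential-closure argument showing that uniform (hence pointwise) convergence preserves the non-strict bounds $k\le\psi_n(t)\le M$. The paper spells out the $\epsilon$-inequality $-\epsilon+k\le\psi(t)\le\epsilon+M$ explicitly before letting $\epsilon\to0$, whereas you invoke the preservation of non-strict inequalities under pointwise limits directly, but the underlying idea is identical.
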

 \begin{proof}
Let $(\psi_n)_{n \in  \N} \subset \mathcal{B}$ such that $\psi_n \longrightarrow \psi$. 
    $ \text{Let us prove that } \psi \in \mathcal{B}.$\\
    \\
Clearly, $\psi \in PAP(\R,\R)$ and we obtain that  $$\begin{array}{lll} \psi_n \underset{n\longrightarrow +\infty}{\longrightarrow }\psi &\Leftrightarrow \forall \epsilon> 0, \exists n_0>0 \text{ such that }  \mid \psi_n(t)-\psi(t)\mid \leq  \epsilon, (\forall t \in \R, \forall n>n_0)\\&\Leftrightarrow  \forall \epsilon> 0, \exists n_0>0 \text{ such that } -\epsilon \leq \psi_n(t)-\psi(t)\leq\epsilon,(\forall t \in \R, \forall n>n_0).\end{array}$$
Let t $\in \R$, we obtain then
 $$-\epsilon+ k \leq \psi(t)=[\psi(t)-\psi_n(t) ]+\psi_n(t) \leq \epsilon +M.$$
So, $\psi \in \mathcal{B}$.
 \end{proof}
 
\begin{lemma}[Theorem 2.17,\cite{E}]
If f $\in PAP_U(\R \times \R,\R)$ and for each bounded subset B of $\R$, f is bounded on $\R \times B$, then the  Nymetskii operator
$$N_f : PAP(\R,\R) \longrightarrow PAP(\R, \R) \text { with } N_f(u)=f(.,u(.))$$ 
is well defined.
\end{lemma}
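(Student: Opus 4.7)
The plan is to invoke the standard decomposition of pseudo almost periodic functions and reduce the claim to three separate composition facts, one in $AP$ and two in $PAP_0$. Given $u \in PAP(\mathbb{R},\mathbb{R})$, write uniquely $u = h + \phi$ with $h \in AP(\mathbb{R},\mathbb{R})$ and $\phi \in PAP_0(\mathbb{R},\mathbb{R})$. Since $u$ and $\phi$ are bounded, so is $h$, and both $u(\mathbb{R})$ and $h(\mathbb{R})$ lie in a common compact set $K \subset \mathbb{R}$. Analogously, decompose $f = f_1 + f_2$ in $PAP_U(\mathbb{R}\times\mathbb{R},\mathbb{R})$, where $f_1$ is almost periodic in $t$ uniformly in $x$ on compacts and $f_2$ is the uniform ergodic perturbation (so the mean $\tfrac{1}{2T}\int_{-T}^{T}|f_2(t,x)|\,dt \to 0$ uniformly for $x$ in compacts). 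Before anything else, I would observe that $f(\cdot,u(\cdot))$ is continuous (composition of continuous maps) and bounded (by the boundedness hypothesis of $f$ on $\mathbb{R}\times K$), so $N_f(u) \in BC(\mathbb{R},\mathbb{R})$; this is the easy part.

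Next, I would split
\begin{equation*}
N_f(u)(t) \;=\; f_1(t,h(t)) \;+\; \bigl[f_1(t,u(t))-f_1(t,h(t))\bigr] \;+\; f_2(t,u(t)),
\end{equation*}
and handle the three summands separately. For the first, I would appeal to the classical Bochner-type result that the composition of an almost periodic function in $t$ (uniformly in $x$ on compacts) with an almost periodic function $h$ taking values in a compact set is itself almost periodic; the uniform continuity in $x$ built into the $PAP_U$ definition is precisely what is needed to carry $\epsilon$-translation numbers through the composition.

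For the second summand, I would exploit the uniform continuity of $f_1$ in $x$ on the compact $K$ guaranteed by (ii) of Definition 2.6: given $\epsilon>0$, choose $\delta>0$ such that $|x-y|\le\delta$ forces $|f_1(t,x)-f_1(t,y)|\le\epsilon$ for every $t\in\mathbb{R}$. Splitting $[-T,T]$ into $E_T := \{t : |\phi(t)|\le\delta\}$ and its complement, the integrand is bounded by $\epsilon$ on $E_T$ and by $2\|f_1\|_{\infty,\mathbb{R}\times K}$ on $E_T^c$, while a Markov-type inequality gives $|E_T^c \cap [-T,T]| \le \delta^{-1}\int_{-T}^{T}|\phi(t)|\,dt$, which is $o(T)$ because $\phi \in PAP_0$. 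Hence the mean vanishes and the second summand lies in $PAP_0$.

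The main obstacle is the third summand $t\mapsto f_2(t,u(t))$: the function $u(t)$ moves with $t$, so one cannot directly plug it into the uniform ergodic convergence of $f_2$. The idea is to approximate: since $K$ is compact, for any $\epsilon>0$ cover $K$ by finitely many balls $B(x_j,\eta)$ and use the uniform continuity in $x$ of $f_2$ on $K$ (together with a simple-function approximation of $u$) to write $|f_2(t,u(t))|\le |f_2(t,u(t))-f_2(t,x_{j(t)})| + |f_2(t,x_{j(t)})|$, where $j(t)$ selects the ball containing $u(t)$. The first piece is controlled pointwise by $\epsilon$, and the second is dominated by $\sum_j |f_2(t,x_j)|$, whose mean tends to $0$ by the uniform ergodic property applied at the finitely many points $x_j$. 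Letting $\epsilon\to 0$ shows the ergodic mean of the third summand vanishes, and combining the three pieces yields $N_f(u) = \bigl(f_1(\cdot,h(\cdot))\bigr) + \bigl(\text{rest}\bigr) \in AP(\mathbb{R},\mathbb{R})+PAP_0(\mathbb{R},\mathbb{R}) = PAP(\mathbb{R},\mathbb{R})$, as required.
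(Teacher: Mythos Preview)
The paper does not prove this lemma at all: it is quoted verbatim as Theorem~2.17 of \cite{E} and used as a black box, with no argument supplied. So there is no ``paper's own proof'' to compare against.

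That said, your proposal is essentially the standard proof one finds in the source \cite{E} and related literature: decompose both $f=f_1+f_2$ and $u=h+\phi$, split $N_f(u)$ into the three pieces $f_1(\cdot,h(\cdot))$, $f_1(\cdot,u(\cdot))-f_1(\cdot,h(\cdot))$, and $f_2(\cdot,u(\cdot))$, and show the first is in $AP$ and the last two are in $PAP_0$. The arguments you sketch for each piece (Bochner-type composition for the first, a Markov/Chebyshev splitting on $\{|\phi|\le\delta\}$ for the second, and a finite-cover approximation of $K$ for the third) are exactly the right ideas. One point you should make explicit, since the paper's Definition~3 of $PAP_U$ only asserts that $f(\cdot,x)\in PAP(\mathbb{R},\mathbb{R})$ pointwise in $x$ together with uniform continuity in $x$: the existence of a decomposition $f=f_1+f_2$ with $f_1\in AP_U$ and with the ergodic mean of $|f_2(\cdot,x)|$ vanishing \emph{uniformly} on compacts is itself a (standard but nontrivial) structural lemma, not an immediate consequence of the definition as stated. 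Your third-summand argument in fact only uses pointwise ergodicity at finitely many $x_j$, so you avoid needing the full uniform statement there; but you do implicitly use that $f_1$ inherits the uniform-in-$t$ continuity in $x$ on $K$ and is bounded on $\mathbb{R}\times K$, which again follows from the structure theory rather than directly from the hypotheses. With those caveats spelled out, your argument is correct and matches the approach of the cited reference.
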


\begin{lemma}\cite{G}
Let f,g $\in AP(\R,\R)$. If $g^->0$ then $F \in AP(\R,\R)$ where
$$F(t)={\displaystyle \int^t_{-\infty}e^{-\int^t_s g(u)du} f(s)   ds}, \quad t\in\R.$$
\end{lemma}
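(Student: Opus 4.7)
The plan is to verify that $F$ is well-defined, bounded, and continuous on $\R$, then to show directly that for every $\epsilon > 0$ the set of $\epsilon$-almost periods of $F$ is relatively dense, by transferring common almost periods of $f$ and $g$ through the integral formula. Since $g^- > 0$, the inequality $e^{-\int_s^t g(u)\,du} \leq e^{-g^-(t-s)}$ for $s \leq t$ makes the integral defining $F$ absolutely convergent and yields $\|F\|_\infty \leq \|f\|_\infty / g^-$.

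For the almost periodicity, I would first apply the change of variables $s \mapsto s + \tau$ in the outer integral (and $u \mapsto u + \tau$ in the inner exponent) to rewrite
\begin{equation*}
F(t+\tau) - F(t) = \int_{-\infty}^{t} \Bigl[ e^{-\int_s^t g(u+\tau)\,du} f(s+\tau) - e^{-\int_s^t g(u)\,du} f(s) \Bigr] ds,
\end{equation*}
then split the bracket by inserting $\pm e^{-\int_s^t g(u+\tau)\,du} f(s)$ to obtain $F(t+\tau) - F(t) = I_1(t) + I_2(t)$, where $I_1$ carries the increment $f(s+\tau) - f(s)$ and $I_2$ carries the difference of the two exponentials.

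Given $\epsilon > 0$, by Remark 1 I would choose $\eta > 0$ small (to be specified) and a common $\eta$-almost period $\tau$ of $f$ and $g$. From $|f(s+\tau) - f(s)| \leq \eta$ and the exponential bound above, $|I_1(t)| \leq \eta / g^-$. For $I_2$, the mean value inequality $|e^{-A} - e^{-B}| \leq e^{-\min(A,B)}|A - B|$, together with $\bigl|\int_s^t [g(u+\tau) - g(u)]\,du\bigr| \leq \eta (t-s)$ and the lower bound $\min \geq g^-(t-s)$, gives a pointwise estimate $\|f\|_\infty\,\eta (t-s)\, e^{-g^-(t-s)}$, whose integral over $(-\infty, t]$ equals $\|f\|_\infty\,\eta / (g^-)^2$. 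Choosing $\eta < \epsilon (g^-)^2 / (g^- + \|f\|_\infty)$ makes the total bound smaller than $\epsilon$ uniformly in $t$, so $\tau$ is an $\epsilon$-almost period of $F$.

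The main obstacle is the uniformity of these estimates in $t \in \R$, which is what makes $\tau$ an honest almost period rather than just a local approximation; this is secured by the translation-invariance of the integration domain $(-\infty, t]$ after the change of variables, and by the fact that all bounds depend only on $g^-$, $\|f\|_\infty$, and $\eta$, not on $t$ itself. Once this uniformity is in place, the relative density of common $\eta$-almost periods provided by Remark 1 transfers verbatim to $F$, which completes the argument.
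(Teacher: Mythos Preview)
The paper does not prove this lemma; it simply cites it from \cite{G} and uses it as a black box in Step~2 of the proof of Theorem~1. So there is no in-paper argument to compare against. Your proof is a correct and standard direct verification via Bohr's definition: the change of variables to align the two integrals, the add-and-subtract decomposition into $I_1$ and $I_2$, the mean-value bound $|e^{-A}-e^{-B}|\le e^{-\min(A,B)}|A-B|$, and the elementary estimate $\int_0^\infty v\,e^{-g^- v}\,dv = 1/(g^-)^2$ all go through exactly as you describe, and the resulting bound is uniform in $t$ as you note. Two minor points worth tightening: (i) you announce continuity of $F$ but do not carry it out --- this is routine via dominated convergence using the same kernel bound $e^{-g^-(t-s)}$, but should be stated since the paper's definition of $AP(\R,\R)$ presupposes $F\in BC(\R,\R)$; (ii) in the $I_2$ estimate, your assertion $\min(A,B)\ge g^-(t-s)$ is justified simply because $g\ge g^-$ pointwise (so both $A$ and $B$ dominate $g^-(t-s)$), not because $\tau$ is an almost period --- you have this right implicitly, but it is worth making explicit to avoid any appearance of circularity.
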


\begin{lemma}\cite{D}
Let F $\in PAP_U(\mathbb{R}\times \R,\R)$ verifies the Lipschitz condition: $\exists L_F >0$ such that
$$| F(t,x)-F(t,y)| \leq L_F |x-y|,\quad \forall x,y\in \R \text{ and } t \in \mathbb{R}.$$

If h $\in PAP(\R,\R)$, then the function $F(.,h(.)) \in PAP(\R,\R)$.
\end{lemma}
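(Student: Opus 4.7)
The plan is to leverage the unique decomposition of pseudo almost periodic functions. Write $h = h_1 + h_2$ with $h_1 \in AP(\R,\R)$ and $h_2 \in PAP_0(\R,\R)$, and analogously decompose $F = F_1 + F_2$ where $F_1$ is almost periodic in $t$ uniformly on compacts in $x$ and $F_2$ is the associated ergodic perturbation, satisfying for every compact $K \subset \R$,
\[
\lim_{T \to +\infty} \frac{1}{2T} \int_{-T}^T |F_2(t, x)|\, dt = 0 \text{ uniformly in } x \in K.
\]
I would then insert the identity
\[
F(t, h(t)) = F_1(t, h_1(t)) + \bigl[F(t, h(t)) - F(t, h_1(t))\bigr] + F_2(t, h_1(t)),
\]
and show the first summand is almost periodic while the remaining two belong to $PAP_0(\R, \R)$.

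First, I would establish $F_1(\cdot, h_1(\cdot)) \in AP(\R, \R)$ by the classical composition argument: since $h_1 \in AP(\R,\R)$ its range is relatively compact with closure $K_0 := \overline{h_1(\R)}$, and on $K_0$ the map $F_1(t, \cdot)$ is uniformly continuous uniformly in $t$. Picking a common $\varepsilon$-almost period $\tau$ for $h_1$ and for the family $\{F_1(\cdot, x)\}_{x \in K_0}$, one combines the uniform continuity in $x$ with the shift estimate in $t$ to bound $|F_1(t + \tau, h_1(t+\tau)) - F_1(t, h_1(t))|$ by $2\varepsilon$, confirming almost periodicity.

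Next, the two remaining terms are shown to lie in $PAP_0$. The Lipschitz hypothesis gives $|F(t, h(t)) - F(t, h_1(t))| \le L_F |h_2(t)|$, so its Cesaro mean over $[-T,T]$ is dominated by $L_F$ times that of $|h_2|$, which vanishes since $h_2 \in PAP_0$. For $F_2(\cdot, h_1(\cdot))$ the argument requires transferring the ergodic bound through a varying argument: given $\varepsilon > 0$, cover the compact $K_0$ by finitely many balls of radius $\delta$ centered at $x_1, \dots, x_p$ with $\delta$ so small that $|F_2(t, x) - F_2(t, x_j)| < \varepsilon$ whenever $|x - x_j| < \delta$, and partition $\R$ according to which ball contains $h_1(t)$. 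Then
\[
\frac{1}{2T}\int_{-T}^T |F_2(t, h_1(t))|\, dt \le \varepsilon + \sum_{j=1}^p \frac{1}{2T}\int_{-T}^T |F_2(t, x_j)|\, dt,
\]
and each of the finitely many integrals tends to $0$ by the uniform $PAP_0$ property of $F_2$ on $K_0$. Letting $\varepsilon \to 0$ completes the step.

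The main obstacle is precisely this last step, since the uniform $PAP_0$ condition yields the ergodic average only for a \emph{fixed} $x$ (uniformly on compacts of $\R$), while here the argument $h_1(t)$ varies with $t$. The finite-cover reduction via uniform continuity in $x$ is the standard device for transferring the ergodic property, but it is what makes the proof nontrivial; everything else is either the classical almost periodic composition lemma or a direct application of the Lipschitz bound. Once this step is in hand, summing the AP and $PAP_0$ contributions yields $F(\cdot, h(\cdot)) \in PAP(\R, \R)$ as claimed.
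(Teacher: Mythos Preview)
The paper does not prove this lemma; it is quoted from Zhang's monograph \cite{D} and stated without argument, so there is no ``paper's own proof'' to compare against. Your write-up reproduces the classical composition proof found in that reference (and in \cite{D1,E,M3}): decompose $F=F_1+F_2$ and $h=h_1+h_2$, write $F(\cdot,h(\cdot))=F_1(\cdot,h_1(\cdot))+[F(\cdot,h(\cdot))-F(\cdot,h_1(\cdot))]+F_2(\cdot,h_1(\cdot))$, treat the first summand by the AP composition lemma, the second by the Lipschitz bound, and the third by a finite-cover argument on the compact range of $h_1$.

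One point deserves explicit care. In the last step you invoke, for the chosen $\delta$, the estimate $|F_2(t,x)-F_2(t,x_j)|<\varepsilon$ uniformly in $t$; this requires knowing that $F_2$ (not just $F$) is uniformly continuous in $x$ on compacts, uniformly in $t$. That is true, but it is not immediate from the definition of $PAP_U$ as stated in the paper: one must first check that the almost periodic component $F_1$ inherits the Lipschitz constant $L_F$ (a standard fact, obtained for instance from the Bochner--Bohr mean or a translation-limit argument), whence $F_2=F-F_1$ is $2L_F$-Lipschitz in $x$. With that detail supplied, your argument is complete and matches the proof in the cited source.
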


\begin{theorem}
If conditions  $\textbf{(H1)-( H3)}$ and 
$$\textbf{[H4]}: {\displaystyle \underset{t\in \mathbb{R} }{sup}\bigg\{-a(t)+\sum^N_{i=1}b_i(t)\bigg[\dfrac{(n-m)}{4}+\dfrac{m}{(1+k^n)^2}\bigg]M^{m-1}+L\bigg\}}<0$$ are fulfilled, then the equation (4) has a unique p.a.p solution x(.) in the region $\mathcal{B}$, given by 
$${\displaystyle x(t)=\int^t_{-\infty} e^{-\int^t_s a(u)du}\sum^{N}_{i=1} \dfrac{b_i(s)x^m(s-\tau_i(s))}{1 + x^n(s-\tau_i(s))}- H(s,x(s-\sigma(s))ds}.$$
\end{theorem}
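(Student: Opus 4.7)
My plan is to realize the p.a.p.\ solution as the unique fixed point of a Picard-type integral operator on the closed subset $\mathcal{B}$ of $PAP(\R,\R)$, and to close the argument by Banach's contraction principle. The mild form of equation (4) suggests introducing
\[
(\Gamma\varphi)(t):=\int_{-\infty}^{t} e^{-\int_{s}^{t} a(u)\,du}\Bigl[\sum_{i=1}^{N}\frac{b_i(s)\varphi^m(s-\tau_i(s))}{1+\varphi^n(s-\tau_i(s))}-H(s,\varphi(s-\sigma(s)))\Bigr]ds
\]
for $\varphi\in\mathcal{B}$. Since Lemma 4.5 makes $\mathcal{B}$ closed in the Banach space $(PAP(\R,\R),\|\cdot\|_\infty)$, a fixed point of $\Gamma$ lying in $\mathcal{B}$ is exactly the desired solution.

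\textbf{Stability of $\mathcal{B}$ under $\Gamma$.} For $\varphi\in\mathcal{B}$, the shifted function $\varphi(\cdot-\tau_i(\cdot))$ remains p.a.p.\ (standard composition of a p.a.p.\ function with a p.a.p.\ delay); applying Lemma 4.7 to the smooth bounded map $u\mapsto u^m/(1+u^n)$, multiplying by $b_i$ via Properties 1(a) and handling the harvesting term by Lemma 4.9, gives the bracket in $PAP(\R,\R)$. The convolution with $e^{-\int_s^t a(u)du}$ preserves this class (p.a.p.\ analogue of Lemma 4.8), hence $\Gamma\varphi\in PAP(\R,\R)$. For the pointwise bounds $k\le(\Gamma\varphi)(t)\le M$: the upper bound uses (C.7) together with $H\ge H^-$ to obtain $(\Gamma\varphi)(t)\le (\sum b_i^+-H^-)/a^-$, which is $<M$ by (H2). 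The lower bound exploits (H1) and the monotonicity picture of (C.3)--(C.6): since $k<\sqrt[n]{m/(n-m)}<M\le\overset{\backsim}{k}$ and $f_{n,m}(\overset{\backsim}{k})=f_{n,m}(k)$, we have $\min_{[k,M]}f_{n,m}=f_{n,m}(k)=k^m/(1+k^n)$ (and the same minimum by (C.5) when $m=n$), so $(\Gamma\varphi)(t)\ge(\sum b_i^-\,k^m/(1+k^n)-H^+)/a^+>k$ by (H3).

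\textbf{Contraction.} The central technical step is a uniform Lipschitz constant for $f_{n,m}(u)=u^m/(1+u^n)$ on $[k,M]$. I would split
\[
|f_{n,m}'(u)|\le\frac{m\,u^{m-1}}{(1+u^n)^2}+(n-m)\,u^{m-1}\cdot\frac{u^n}{(1+u^n)^2},
\]
bound the first summand via $u\ge k$ and the second via the sharp AM--GM inequality $u^n/(1+u^n)^2\le 1/4$, to reach $|f_{n,m}'(u)|\le L^{\star}:=\bigl[\tfrac{n-m}{4}+\tfrac{m}{(1+k^n)^2}\bigr]M^{m-1}$ throughout $[k,M]$. Coupling this with the Lipschitz condition on $H$ yields
\[
|(\Gamma\varphi)(t)-(\Gamma\psi)(t)|\le\|\varphi-\psi\|_\infty\int_{-\infty}^{t}e^{-\int_s^t a(u)du}\Bigl[\sum_{i=1}^{N}b_i(s)L^{\star}+L_H\Bigr]ds.
\]
Writing $g(s):=\sum_{i}b_i(s)L^{\star}+L_H$, condition (H4) supplies $\eta:=\sup_t(g(t)-a(t))<0$, whence $g(s)\le a(s)+\eta$. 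Using the identity $\int_{-\infty}^{t} a(s)\,e^{-\int_{s}^{t}a(u)du}\,ds=1$ and $\int_{-\infty}^{t}e^{-\int_s^t a(u)du}ds\ge 1/a^+$, the right-hand integral is $\le 1+\eta/a^+<1$. Hence $\Gamma$ is a strict contraction on $\mathcal{B}$, and Banach's principle delivers the unique fixed point, which is the unique p.a.p.\ solution in $\mathcal{B}$.

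\textbf{Expected obstacle.} The hardest piece is the Lipschitz estimate, because when $m<n$ the sign of $f_{n,m}'$ flips inside $[k,M]$, so one cannot merely evaluate at the endpoints; the decomposition of $|f_{n,m}'|$ into two nonnegative terms together with the sharp bound $u^n/(1+u^n)^2\le 1/4$ is precisely what produces the constant $\tfrac{n-m}{4}+\tfrac{m}{(1+k^n)^2}$ appearing in (H4). A secondary subtlety is that (H4) is pointwise in $t$ rather than a sup-norm condition, which forces the use of the identity $\int_{-\infty}^t a(s)e^{-\int_s^t a(u)du}ds=1$ instead of the crude bound $e^{-\int_s^t a}\le e^{-a^-(t-s)}$ in order to recover a contraction factor strictly less than $1$.
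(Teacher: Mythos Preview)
Your fixed-point scheme on $\mathcal{B}$ via the integral operator $\Gamma$, the Lipschitz estimate $|f_{n,m}'(u)|\le\bigl[\tfrac{n-m}{4}+\tfrac{m}{(1+k^n)^2}\bigr]M^{m-1}$ on $[k,M]$ (via the split into two nonnegative terms and $u^n/(1+u^n)^2\le 1/4$), and the invariance bounds $k\le\Gamma\varphi\le M$ from (H2)--(H3) all coincide with the paper's argument. Two differences deserve comment.

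First, to squeeze out a contraction factor strictly below $1$ the paper does not use your additive trick $g\le a+\eta$. Instead it sets $\gamma(u)=\sup_t\{-a(t)+[\sum_i b_i(t)L^\star+L]e^u\}$, notes $\gamma(0)<0$ by (H4), and by continuity finds $\zeta>0$ with $\gamma(\zeta)<0$; this gives the multiplicative bound $\sum_i b_i(s)L^\star+L\le a(s)e^{-\zeta}$ pointwise, and then the identity $\int_{-\infty}^t a(s)e^{-\int_s^t a}\,ds=1$ delivers the clean factor $e^{-\zeta}$. Your route via $1+\eta/a^+$ is equally valid and arguably more direct; both exploit the pointwise nature of (H4) in the way you anticipated.

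Second --- and this is where you underestimate the work --- the step ``convolution with $e^{-\int_s^t a(u)du}$ preserves PAP (p.a.p.\ analogue of Lemma~4.8)'' is not available as a citation. The lemma you invoke is stated only for $f,g\in AP(\R,\R)$, whereas here $a$ is merely pseudo almost periodic. The paper spends the bulk of its Step~2 on exactly this point: it writes $a=a_1+a_2$, $G=G_1+G_2$ with $a_1,G_1\in AP$ and $a_2,G_2\in PAP_0$, splits $\Gamma\psi$ into three pieces $I+II+III$, handles $I\in AP$ by the cited AP-lemma, and then proves $II,III\in PAP_0$ by hand using Fubini, the mean value theorem applied to $e^{-\int a}-e^{-\int a_1}$, and dominated convergence. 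Your proposal treats this as routine, but it is in fact the most laborious part of the paper's proof; everything else you identified correctly.
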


\begin{proof}\item

\text{ }Step 1:\\

Clearly $\mathcal{B}$  is a  bounded set. Now, let  $\psi \in \mathcal{B}$ and $f(t,z)=\psi(t-z)$,  since the numerical application $\psi$ is continuous and the space PAP($\R,\R)$ is a translation invariant then the function $f \in PAP_U(\R \times \R, \R^+)$. Furthermore $\psi$ is bounded, then $f$ is bounded on $\R\times B$. 
By the lemma 4, the Nymetskii operator
\begin{eqnarray*}
 N_f : PAP(\R,\R)&\longrightarrow & PAP(\R,\R)
 \\\tau_i &\longmapsto &f(.,\tau_i(.))\end{eqnarray*}
is well defined for ${\displaystyle \tau_i \in  PAP(\R,\mathbb{R})}$ such that $0 \leq i \leq N$. Consequently,$${\displaystyle \bigg[t\longmapsto\psi(t-\tau_i(t))\bigg]\in  PAP(\R,\mathbb{R^+})}\text{ for all } i=1,...,N.$$

Since ${\displaystyle \underset{t\in \R}{inf} |1+\psi^n(t-\tau_i(t))|>0}$ and using properties 1, the p.a.p functions one has
$${\displaystyle\bigg [ t \longmapsto \sum^{N}_{i=1} \dfrac{b_i(t)\psi^m(t-\tau_i(t))}{1 + \psi^n(t-\tau_i(t))} \bigg] \in PAP(\R,\R)}.$$

Also, under the fact that the  harvesting term verifies the Lipschitz  condition, being the lemma 6,
$${\displaystyle \bigg [G : t \longmapsto \sum^{N}_{i=1} \dfrac{b_i(t)\psi^m(t-\tau_i(t))}{1 + \psi^n(t-\tau_i(t))} -H(t,\psi(t-\sigma(t))\bigg] \in PAP(\R,\mathbb{R})}.$$

\text{ }Step 2:
Let us define the operator $\Gamma$ by
$$\Gamma(\psi)(t)=\int^t_{-\infty}  e^{-\int^t_s a(u)du} G(s)   ds$$
We shall prove that $\Gamma$ maps $\mathcal{B}$ into itself. First, since the functions G(.) and a(.) are  p.a.p one can write $$G=G_1+ G_2 \text{ and }a=a_1+ a_2,$$where $G_1,a_1\in AP(\R,\R)$ and $G_2,a_2 \in PAP_0(\mathbb{R}, \R)$.
So, one can deduce
$$\begin{array}{ll}
\Gamma(\psi)(t)&{\displaystyle=\int^t_{-\infty}  e^{-\int^t_s a_1(u)du} G_1(s)ds+\int^t_{-\infty} \bigg[ e^{-\int^t_s a(u)du} G(s)-e^{-\int^t_s a_1(u)du}G_1(s)\bigg]ds}\\
\\&{\displaystyle=\int^t_{-\infty}  e^{-\int^t_s a_1(u)du} G_1(s)ds+\int^t_{-\infty}  e^{-\int^t_s a_1(u)du} G_1(s)\bigg[e^{-\int^t_s a_0(u)du}-1\bigg]ds}\\
\\&+  \int^t_{-\infty}e^{-\int^t_s a(u)du}G_2(s)ds\\
 \\&{\displaystyle=I(t)+II(t)+III(t). }\end{array}$$\\

 By the lemma \textbf{5},  $I(.) \in AP(\R,\R)$. \\
\\
 Now, we show that II(.) is ergodic. One has
$$\begin{array}{lll}
II(t)&={\displaystyle\int^t_{-\infty}  e^{-\int^t_s a_1(u)du} G_1(s)\bigg[e^{-\int^t_s a_2(u)du}-1\bigg]ds}\\
\\&={\displaystyle \int^{+\infty}_0 e^{-\int^t_{t-v} a_1(u)du} G_1(t-v)\bigg[e^{-\int^t_{t-v} a_2(u)du}-1\bigg]dv}\\
\\&={\displaystyle \int^{v_0}_0 +\int^{+\infty}_{v_0} e^{-\int^{v}_0a_1(t-s)ds} G_1(t-v)\bigg[e^{-\int^{v}_0 a_2(t-s)ds}-1\bigg]dv}\\
\\&=II_1(t)+II_2(t).
\end{array}$$

Since $a^-_1\geq a^-$ and for large enough $v_0$, we obtain 
$$\begin{array}{lll}
|II_2(t)|&\leq {\displaystyle \int^{+\infty}_{v_0} e^{-\int^{v}_0 a_1(t-s)ds} |G_1(t-v)|\bigg[|e^{-\int^{v}_0 a_2(t-s)ds}|+1\bigg]dv}\\
\\&= {\displaystyle \int^{+\infty}_{v_0} \|G_1\|_{\infty}\bigg[e^{-\int^{v}_0 a(t-s)ds}+ e^{-\int^{v}_0 a_1(t-s)ds}\bigg]dv}\\
\\&\leq  {\displaystyle \int^{+\infty}_{v_0} 2 \|G_1\|_{\infty}e^{- a^-v}dv <\dfrac{\epsilon}{2}}.
\end{array}$$
So, $II_2 \in PAP_0(\R,\R)$.\\

Thereafter, it has not yet been demonstrated that $II_1(.) \in PAP_0(\R,\R)$.\\
Firstly,  we prove that the following function
$$\mu(v,t)={\displaystyle \int^v_0 a_2(t-s)ds, \qquad (v\in [0,v_0], t\in \R)}$$
is in $PAP_0([0,v_0] \times \R, \R)$. Clearly $|\mu(v,t)|\leq {\displaystyle\int^v_0 |a_2(t-s)|ds}$, then  it is obviously sufficient to prove that the function $${\displaystyle\int^v_0 |a_2(.-s)|ds \in PAP_0(\R,\R).}$$

We have $a_2 (.)\in PAP_0(\R,\R)$, for $\epsilon >0$ there exists $T_0 >0$ such that
$${\displaystyle \dfrac{1}{2T} \int^T_{-T} |a_2(t-s)|dt \leq \dfrac{\epsilon}{v}, \qquad (T\geq T_0, s\in [0,v])}.$$
Since $[0,v]$ is bounded, the Fubini's theorem gives for $\epsilon >0$ there exists $T_0 >0$ such that $${\displaystyle \dfrac{1}{2T} \int^T_{-T} \int^v_0 |a_2(t-s)|ds dt\leq \epsilon, \qquad T\geq T_0}.$$
So,  $${\displaystyle\int^v_0 |a_2(.-s)|ds \in PAP_0(\R,\R)}$$ which implies the required result.\\

Then, we obtain 
$$\begin{array}{	lll}
{\displaystyle\dfrac{1}{2T} \int^T_{-T}|II_1(t)|dt } & {\displaystyle\leq \dfrac{1}{2T} \int^T_{-T}\int^{v_0}_0 e^{-\int^{v}_0 a_1(t-s)ds} |G_1(t-v)||e^{-\int^{v}_0 a_2(t-s)ds}-1|dv dt }\\
\\& ={\displaystyle\dfrac{1}{2T} \int^T_{-T}\int^{v_0}_0 |G_1(t-v)||e^{-\int^{v}_0 a(t-s)ds}-e^{-\int^{v}_0 a_1(t-s)ds}|dv dt }.
\end{array}$$
By the mean value theorem, $\exists \eta \in ]0,1[$ such that 
\begin{eqnarray*}
{\displaystyle\dfrac{1}{2T} \int^T_{-T}|II_1(t)|dt }\leq &&{\displaystyle \dfrac{1}{2T} \int^T_{-T}\int^{v_0}_0 |G_1(t-v)|e^{-[(1-\eta)\int^{v}_0 a(t-s)ds+ \eta \int^{v}_0 a_1(t-s)ds]}}\\
\\&&\times\bigg(\int^{v}_0|a_2(t-s)|ds\bigg) dv dt .
\end{eqnarray*}

Since the function $\mu \in PAP_U([0,x_0] \times \R, \R)$ and in virtue of  the  Fubini's theorem for $\epsilon>0$, $\exists T_1>0$ such that 
$$\begin{array}{lll}
{\displaystyle\dfrac{1}{2T} \int^T_{-T}|II_1(t)|dt }&{\displaystyle\leq \int^{v_0}_0 \|G_1\|_{\infty} \dfrac{\epsilon}{ 2\|G_1\|_{\infty} v_0}}=\dfrac{\epsilon}{2},  \qquad (\forall T\geq T_1).
\end{array}$$\\
So, $II_1\in PAP_0(\R,\R)$.\\

Finally, we study the ergodicity of III(.). We have 
 
 $$\begin{array}{ll}
{\displaystyle \dfrac{1}{2T}\int^T_{-T} |III(t)|dt}& {\displaystyle \leq \dfrac{1}{2T} \int^T_{-T}  \int ^t _{-\infty} e^{-(t-s) a^-}   \mid G_2(s)  \mid ds dt}\\
\\&\leq III_1(T) + III_2(T),
\end{array}$$
where $$III_1(T)={\displaystyle  \dfrac{1}{2T} \int^T_{-T}  \int ^t _{-T} e^{-(t-s) a^-}    \mid G_2(s)  \mid ds dt}$$ and
$$III_2(T)={\displaystyle \dfrac{1}{2T} \int^{-T}_{-\infty}  \int ^t _{-\infty} e^{-(t-s) a^-}   \mid G_2(s)  \mid ds dt}.$$

Next, we prove that $$\underset{T\rightarrow +\infty}{lim} III_1(T)=\underset{T\rightarrow +\infty}{lim}III_2(T)=0.$$
By the Fubini's theorem, we obtain 
$$\begin{array}{ll}
III_1(T)&{\displaystyle = \int ^{+\infty}_0 e^{- a^- u} \dfrac{1}{2T} \int^T_{-T}  \mid  G_2(t-u)  \mid  dt du}\\
\\&{\displaystyle= \int ^{+\infty}_0 e^{- a^- u} \dfrac{1}{2T} \int^{T-u}_{-T-u}  \mid  G_2(t)  \mid  dt du}\\
\\&{\displaystyle \leq \int ^{+\infty}_0 e^{- a^- u} \dfrac{1}{2T} \int^{T+u}_{-(T+u)}  \mid  G_2(t)  \mid  dt du.}
\end{array} $$

Now, since  $G_2 \in PAP_0(\mathbb{R},\mathbb{R})$, then the function $\Psi_T$  defined by
$${\displaystyle \Psi_T(u)=\dfrac{T+u}{T} \dfrac{1}{2(T+u)} \int^{T+u}_{-(T+u)}  \mid  G_2(t)  \mid  dt} $$ 

is bounded and satisfy $\underset{T\longrightarrow +\infty}{lim} \Psi_T(u)=0$. From the Lebesgue dominated convergence theorem, we obtain $$\underset{T\rightarrow +\infty}{lim}III_1(T)=0.$$ 

On the other hand, notice that $\|G_2\|_\infty<0$ and by setting $\xi=t-s$ we obtain 
$$\begin{array}{ll}
III_2(T)&{\displaystyle \leq \dfrac{\|G_2\|_{\infty} }{2T} \int^T_{-T} \int ^{+\infty}_{2T} e^{- a^- \xi}  d\xi  dt}\\
\\&{\displaystyle =\dfrac{\|G_2\|_{\infty} }{a^-} e^{-2 a^- T}}\qquad \underset{T\longrightarrow +\infty }{\longrightarrow 0}.
\end{array} $$
which implies the required result.\\
\\

Step 3:\\
Let
\vspace{-0.5cm}
\begin{eqnarray*}\gamma : [0,1]&\longrightarrow& \mathbb{R}\\
u&\longmapsto& {\displaystyle \underset{t\in \mathbb{R} }{sup}\bigg\{-a(t)+\bigg[\sum^N_{i=1}b_i(t) \bigg(\dfrac{(n-m)}{4}+\dfrac{m}{(1+k^n)^2}\bigg)M^{m-1}+L\bigg]e^u\bigg\}.}\end{eqnarray*}
It is clear that $\gamma$ is continuous function on [0,1].\\
From $\textbf{(H4)}$ : $\gamma$(0)<0, so $\exists \zeta \in [0,1]$ such that $$\gamma(\zeta)<0\qquad (C.8).$$

Next, we claim that $ \Gamma(\psi)(t) \in [k,M]$ for all $t\in \R .$\\
For $\psi \in \mathcal{B}$, we have $$
\begin{array}{lll}
\bullet \text{ }\Gamma(\psi)(t)&\leq{\displaystyle  \int ^t_{-\infty}  e^{-(t-s) a^-} \left[  \sum^{N}_{i=1} b_i^+ -H^-\right]ds}  \qquad(\text{By \textbf{(C.1)}})\\
     \\&\leq{\displaystyle  \int ^t_{-\infty}  e^{a^-(t-s)} a^- M   ds}     \qquad   ({\text{By \textbf{(H2)}} )}\\
      \\&=M.\\
     \\ \bullet \text{ }\Gamma(\psi)(t)&\geq {\displaystyle  \int^t_{-\infty}  e^{-(t-s)a^+}\left[ \sum^{N}_{i=1} \dfrac{b_i^- k^m}{1+k^n} -H^+\right] ds}  \quad (\text{By \textbf{(C.3)} and \textbf{(C.5)}})\\
 \\ &\geq {\displaystyle \int ^t_{-\infty}  e^{-(t-s)a^+} a^+ k   ds}  \qquad  (\text{By \textbf{(H3)}}  )\\
  \\&=k.
 \end{array}
 $$
Thus $\Gamma$  a self-mapping from $\mathcal{B}$ to $\mathcal{B}$. \\
\\

 $\ast$ $\Gamma$ is a contraction. Indeed;
Let $\varphi ,\psi \in \mathcal{B} $, we have \begin{eqnarray*}
 \Vert \Gamma(\varphi)-\Gamma(\psi)\Vert _\infty &=&\underset{t\in \R}{sup}|\Gamma(\phi)(t)-\Gamma(\psi)(t)|\\
\\ &\leq &{\displaystyle \underset{t\in \mathbb{R}}{sup} \int^t_{-\infty} e^{-\int^t_s a(u)du} \sum^{N}_{i=1} b_i(s) \bigg| \dfrac{\varphi^m(s-\tau_i(s))}{1 + \varphi^n(s-\tau_i(s))}-\dfrac{\psi^m(s-\tau_i(s))}{1 + \psi^n(s-\tau_i(s))}\bigg| }\\
\\&&+\bigg|H(s,\varphi(s-\sigma(s))-H(s,\psi(s-\sigma(s))\bigg|ds.
\end{eqnarray*}
By the mean value theorem, one can obtain $$\begin{array}{lll}
\bigg|\dfrac{x^m}{1+x^n}- \dfrac{y^m}{1+y^n}\bigg|&=|g'(\theta)| |x-y| \qquad \qquad\text{\qquad \text{\qquad }where $\bigg[g : t\in \R^+ \longrightarrow \dfrac{t^m}{1+t^n}\bigg]$}\\ \\
&=\bigg|\dfrac{\theta^{m-1+n}(m-n)+m\theta^{m-1}}{(1 + \theta^n)^2}\bigg| |x-y|\\
\\&\leq \bigg[\dfrac{\theta^{m-1}(n-m)}{4 } +\dfrac{m\theta^{m-1}}{(1+\theta^n)^2}\bigg]|x-y|,\\
\end{array} $$ where $x,y \in [k,M]$, $\theta$ lies between $x$ and $y$.  \\

Consequently, the following estimate hold 
$$
\begin{array}{lll}
{\displaystyle \Vert \Gamma(\varphi)-\Gamma(\psi)\Vert_\infty}& \leq {\displaystyle \underset{t\in \mathbb{R}}{sup} \int^t_{-\infty} e^{-\int^t_s a(u)du} \bigg(\sum^{N}_{i=1} b_i(s) \bigg[ \dfrac{(n-m)M^{m-1}}{4}+\dfrac{m M^{m-1} }{(1+k^n)^2}\bigg]}\\
\\& \bigg| \varphi(s-\tau_i(s))-\psi(s-\tau_i(s))\bigg| +L \parallel \varphi-\psi \parallel_\infty\bigg) ds\\
\\& {\displaystyle  \leq \underset{t\in \mathbb{R}}{sup} \int^t_{-\infty} e^{-\int^t_s a(u)du} \bigg(\sum^{N}_{i=1} b_i(s) \bigg[\dfrac{(n-m)}{4}+\dfrac{m}{(1+k^n)^2}\bigg]M^{m-1}+L\bigg)}\\
\\& {\displaystyle \times \parallel \varphi-\psi \parallel _\infty ds}\\
\\&\leq {\displaystyle  \parallel \varphi-\psi \parallel _\infty \underset{t \in \mathbb{R}}{sup} \int ^t_{-\infty}  e^{-\int^t_s a(u)du}a(s) e^{-\zeta}ds} \qquad (\text{By \textbf{(C.8)}})\\
\\& \leq {\displaystyle e^{-\zeta} \parallel \varphi-\psi \parallel_\infty},\end{array}$$

which proves that $\Gamma$ is a contracting operator on ${\displaystyle \mathcal{B}}$. By using fixed point theorem, we obtain that operator $\Gamma$ has a unique fixed point ${\displaystyle x^* (.)\in \mathcal{B}}$, which corresponds to unique p.a.p solution of the equation (4).
 \end{proof}
\section{The stability of the pap solution}
\label{sec:4}
\begin{definition}\cite{I} Let $f : \R \longrightarrow \R$ be a continuous function, then the upper right derivative of $f$ is defined as
$$D^+f(t)= \overline{\underset{h\rightarrow0^+}{lim}}\dfrac{f(t + h) - f(t)}{h}.$$
\end{definition}
\begin{definition} We say that a solution $x^*$ of Eq. (4) is a global attractor or globally asymptotically stable (GAS) if for any
positive solution $x_t(t_0,\phi)$ 
$$\underset{t\rightarrow 	+\infty}{lim}|x^*(t)-x_t(t_0,\phi)|=0.$$
\end{definition}
\begin{theorem}
Under the assumptions \textbf{H(1)-H(4)}, the positive pseudo almost periodic solution $x^*(.)$ of the equation (4) is  a global attractor.
\end{theorem}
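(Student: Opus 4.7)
The plan is to prove global attractivity by comparing an arbitrary positive solution $x(t)=x_t(t_0,\phi)$ to the unique p.a.p solution $x^*(.)\in \mathcal{B}$ furnished by Theorem 1, and then to show that $y(t):=x(t)-x^*(t)$ tends to zero. The whole argument mirrors the contraction estimate of Step 3 in the proof of Theorem 1, but transported from the integral formulation to a differential (Lyapunov) one via the upper Dini derivative introduced in Definition 5.1.

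First I would use Lemma 4.4: for admissible initial data $\phi\in\mathcal{C}^0$ the solution $x(t)$ stays in $[k,M]$ for all $t\ge t_0$, while $x^*(t)\in[k,M]$ by construction. This confines both trajectories to the interval on which the mean-value estimate
$$\bigg|\dfrac{x^m}{1+x^n}-\dfrac{(x^*)^m}{1+(x^*)^n}\bigg|\leq \bigg[\dfrac{(n-m)M^{m-1}}{4}+\dfrac{m M^{m-1}}{(1+k^n)^2}\bigg]|x-x^*|$$
derived in the proof of Theorem 1 is valid. Combining this with the Lipschitz hypothesis on $H$ and differentiating $|y(t)|$ in the Dini sense gives the differential inequality
$$D^+|y(t)|\leq -a(t)|y(t)|+\sum_{i=1}^{N}b_i(t)\bigg[\dfrac{n-m}{4}+\dfrac{m}{(1+k^n)^2}\bigg]M^{m-1}|y(t-\tau_i(t))|+L_H|y(t-\sigma(t))|.$$

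Next I would introduce the weighted function $V(t)=|y(t)|e^{\zeta t}$, where $\zeta\in(0,1]$ is the constant selected in $(C.8)$ via the continuity of the map $\gamma$; by $(H4)$, $\gamma(0)<0$ and hence $\gamma(\zeta)<0$ for $\zeta$ small enough. Put $Q:=\sup_{-r\le s\le 0}|y(t_0+s)|e^{\zeta t_0}$ and argue by contradiction: if $V(t)>Q$ for some $t>t_0$, let $t_1$ be the first such instant. Then $D^+V(t_1)\geq 0$, while bounding each delayed term by $|y(t_1-\tau_i(t_1))|e^{\zeta(t_1-\tau_i(t_1))}\cdot e^{\zeta\tau_i(t_1)}\leq Q e^{\zeta r}$ and similarly for the $\sigma$-term, one obtains
$$D^+V(t_1)\leq \bigg\{-a(t_1)+\bigg[\sum_{i=1}^{N}b_i(t_1)\bigg(\dfrac{n-m}{4}+\dfrac{m}{(1+k^n)^2}\bigg)M^{m-1}+L_H\bigg]e^{\zeta}\bigg\}Q\leq \gamma(\zeta)Q<0,$$
which is impossible. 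Hence $V(t)\leq Q$ for every $t\geq t_0$, so $|y(t)|\leq Q\,e^{-\zeta t}\to 0$, yielding not merely global attractivity but exponential convergence.

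The main obstacle I foresee is not the Lyapunov computation itself, which is routine once permanence is in hand, but the scope of the initial data: Lemma 4.4 only delivers the enclosure $k<x(t)<M$ when $\phi\in\mathcal{C}^0$, whereas Definition 5.2 asks for attractivity of \emph{every} positive solution. To cover a general positive $\phi\in BC([-r,0],\mathbb{R}^+)$ with $\phi(0)>0$, one must first show that every such solution eventually enters $[k,M]$; this should be achievable by an upper-envelope argument based on Proposition 1 (to obtain an $M$-type upper bound beyond $t^*$) together with a symmetric lower-bound argument using $(H3)$ and the harvesting control, which provides the missing ultimate lower bound $k$. Once the solution is pushed into the invariant strip, the Lyapunov estimate above applies from a shifted initial time $t^*$ and drives $|x(t)-x^*(t)|$ to zero exponentially.
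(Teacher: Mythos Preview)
Your strategy coincides with the paper's: set $y=x-x^{*}$, use the Lyapunov function $V(t)=|y(t)|e^{\lambda t}$, and derive a contradiction at the first crossing time via the mean-value bound on $u\mapsto u^{m}/(1+u^{n})$ together with the Lipschitz condition on $H$. Two small computational slips, however, keep your displayed inequality from being correct as written. First, differentiating $V(t)=|y(t)|e^{\zeta t}$ contributes an extra term $\zeta|y(t)|e^{\zeta t}$, so the coefficient in front of $Q$ must read $\zeta-a(t_{1})+[\cdots]$ rather than $-a(t_{1})+[\cdots]$. Second, the delayed factors give $e^{\zeta\tau_{i}(t_{1})}\le e^{\zeta r}$ (and likewise $e^{\zeta\sigma(t_{1})}\le e^{\zeta r}$), not $e^{\zeta}$. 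Because of these two points the function $\gamma$ from $(C.8)$, which carries neither the additive $u$ nor the $r$ in the exponent, is not the right auxiliary object here; the paper instead introduces a separate function
\[
\Delta(u)=u-a^{-}+\Big[L_{H}+\sum_{i=1}^{N}b_{i}^{+}\Big(\tfrac{n-m}{4}+\tfrac{m}{(1+k^{n})^{2}}\Big)M^{m-1}\Big]e^{u r},
\]
notes that $\Delta(0)<0$ by $(H4)$, and picks $\lambda>0$ with $\Delta(\lambda)<0$. With this adjustment your contradiction argument goes through verbatim and yields $|y(t)|<M_{1}e^{-\lambda t}$, exactly as in the paper.

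Your closing remark about the scope of the initial data is well observed: the paper, like you, restricts to $\phi\in\mathcal{C}^{0}$ and does not supply the eventual-entry argument into $[k,M]$ that Definition~5.2 would strictly require for arbitrary positive initial data.
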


\begin{proof}
Firstly, set $x_t(t_0,\phi)$ for $\phi \in \mathcal{C}^0$ by $x(t)$ for all $t \in \R$. Let  $$y(.)=x(.)-x^*(.).$$Then,
\begin{eqnarray*}
 y'(t) &=& -a(t)[x(t)-x^*(t)]+\sum^{N}_{i=1} b_i(t)\bigg[\dfrac{x^m(t-\tau_i(t))}{1 + x^n(t-\tau_i(t))}-\dfrac{x^*{^m}(t-\tau_i(t))}{1 + x^*{^n}(t-\tau_i(t))}\bigg] \\
\\ &&-\bigg [H(t,x(t-\sigma(t)))-H(t,x^*(t-\sigma(t)))\bigg].
 \end{eqnarray*}
Let us define a continuous function $\Delta :\mathbb{R^+}\longrightarrow \mathbb{R}$ by
$$\Delta(u)= u-a^-+\bigg[L_{H}+\sum_{i=1}^{N} b^+_i\bigg(\dfrac{(n-m)}{4}+\dfrac{m}{(1+k^n)^2}\bigg)M^{m-1}\bigg]\exp(ut).$$

From $\textbf{(H4)}$, we have $\Delta	(0)<0$, then there exists $\lambda\in \mathbb{R}^+$, such that$$\Delta(\lambda)<0 \qquad (C.11).$$

We consider the Lyapunov  functional $V(t)=|y(t)|e^{\lambda t}$. Calculating the upper right derivative of $V(t)$, we obtain
  \begin{eqnarray*}
    D^+V(t) &\leq & \bigg[-a(t)|y(t)|+\sum_{i=1}^{N} b_i(t)\bigg|\frac{x^m(t-\tau_i(t))}{1+x^n(t-\tau_i(t))} -\frac{(x^*)^m(t-\tau_i(t))}{1+(x^*)^n(t-\tau_i(t))}\bigg|+ \bigg|H(t,x^*(t-\sigma(t)))\\
    \\&&-H(t,x(t-\sigma(t)))\bigg|\bigg]e^{\lambda t}+\lambda|y(t)|e^{\lambda t}\\
   \\&\leq & e^{\lambda t}\bigg((\lambda-a^-)|y(t)|+L_H|y(t-\sigma(t))|+\sum_{i=1}^{N} b^+_i \bigg|\frac{x^m(t-\tau_i(t))}{1+x^n(t-\tau_i(t))}-\frac{(x^*)^m(t-\tau_i(t))}{1+(x^*)^n(t-\tau_i(t))}\bigg|\bigg).
  \end{eqnarray*}
We claim that
  \begin{eqnarray*}
    V(t) &=& |y(t)|e^{\lambda t}< e^{\lambda t_0}(M+\max_{t<t_0}|x(t)-x^*(t)|)=M_1, \forall t\geq t_0.
  \end{eqnarray*}
Suppose that there exists $t_1>t_0$ such that
  \begin{eqnarray*}
    V(t_1) &=& M_1, V(t)<M_1, \forall \text{ } t_0-r\leq t< t_1.
  \end{eqnarray*}
Besides,
  \begin{eqnarray*}
   0\leq D^+V(t_1)&\leq& (\lambda-a^-)|y(t_1)|e^{\lambda t_1}+L_H|y(t_1-\sigma(t_1))|e^{\lambda (t_1-\sigma(t_1))}e^{\sigma(t_1) \lambda}\\   
    \\&&+\sum_{i=1}^{N} {b_i}^+ e^{\lambda t_1}\bigg[\frac{x^m(t_1-\tau_i(t_1))}{1+x^n(t_1-\tau_i(t_1))}-\frac{(x^*)^m(t_1-\tau_i(t_1))}{1+(x^*)^n(t_1-\tau_i(t_1))}\bigg].
  \end{eqnarray*}

On the other hand, for all $x,x^* \in \R^+$ we have 

$$\bigg|\dfrac{x^m}{1+x^n}-\dfrac{(x^*)^m}{1+(x^*)^n}\bigg| \leq \bigg[\dfrac{\zeta^{m-1}(n-m)}{4 } +\dfrac{m\zeta^{m-1}}{(1+\zeta^n)^2}\bigg] |x-x^*|,$$

where $\zeta \in [k,M]$.
 \\
 
So, we obtain
 \begin{eqnarray*}
    \bigg |\dfrac{x^m(t-\tau_i(t))}{1+x^n(t-\tau_i(t))}-\frac{(x^*)^m(t-\tau_i(t))}{1+(x^*)^n(t-\tau_i(t))}\bigg | \leq  \bigg[\dfrac{M^{m-1}(n-m)}{4 } +\dfrac{M^{m-1}}{(1+k^n)^2}\bigg]   |y(t-\tau_i(t))|.
   \end{eqnarray*}
   
 Then,
\begin{eqnarray*}
   0&\leq& D^+V(t_1)\\
    \\&\leq& (\lambda-a^-)|y(t_1)|e^{\lambda t_1}+L_H|y(t_1-\sigma(t_1))|e^{\lambda (t_1-\sigma(t_1))}e^{\sigma(t_1)\lambda}\\
    \\ &&+\sum_{i=1}^{N} \bar{b_i}e^{\lambda (t_1-\tau_i(t_1)) }e^{\tau_i(t_1) \lambda}|y(t_1-\tau_i(t_1))|\bigg[\dfrac{M^{m-1}(n-m)}{4 } +\dfrac{m M^{m-1}}{(1+k^n)^2}\bigg]\\
      \\&\leq& \bigg(\lambda-a^-+L_H e^{r\lambda}+\sum_{i=1}^{N} b^+_i e^{r \lambda}\bigg[ \dfrac{(n-m)}{4}+\dfrac{m}{(1+k^n)^2}\bigg]M^{m-1} \bigg)M_1.
  \end{eqnarray*}
  
However, by \textbf{(C.11)}
  \begin{eqnarray*}
  \lambda-a^-+L_H e^{r\lambda}+\sum_{i=1}^{N} b^+_i e^{r \lambda}\bigg [ \dfrac{(n-m)}{4}+\dfrac{m}{(1+k^n)^2}\bigg]M^{m-1} <0,
  \end{eqnarray*}
  which is contradicts the hypothesis. Consequently, $|y(t)|< M_1 e^{-\lambda t},\text{ } \forall t> t_0$.
\end{proof}

 \section{An example}
 \label{sec:5}
 In this section, we present an example to check the validity of our theoretical results obtained in the previous sections.\\
 \\
First, we construct a function $\omega(t)$. For $n = 1, 2,...$ and $0 \leq i < n$, $$a_n=\dfrac{n^3-n}{3}$$ and intervals $$I_n^i = [a_n + i, a_n + i + 1].$$ Choose a non-negative, continuous function g on [0,1] defined by $$g(s) =\dfrac{8}{\pi}\sqrt{s-s^2}.$$
Define the function $\omega$ on $\R$ by 
 \[\omega(t)=
\left \{
\begin{array}{lll}
g[t-(a_n+i)],  &\qquad t \in I_n^i,\\

0,          &\qquad t\in \R^+ \  \cup \{I_n^i: n=1,2,...,0\leq i \leq n \}, \\

\omega(-t), &\qquad t<0.

\end{array}
\right.
\]\\

From {\color{blue}{(\cite{D}, p.211, Example 1.7)}}, we know that $\omega \in  PAP_0( \R, \R)$ is ergodic. However, $\omega \notin C_0( \R, \R)$. 

\begin{figure}[hbtp]
\includegraphics[scale=0.4]{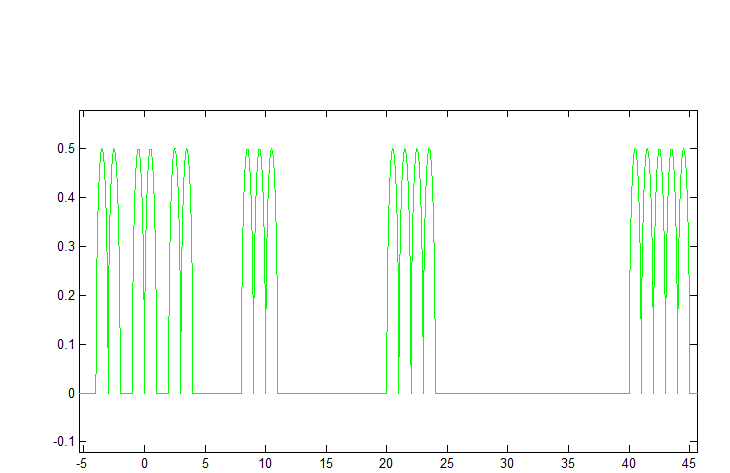}
\caption{Diagram of $\omega$}
\end{figure}

Let us consider the case $n=m=2$ , $N=1$,\\
\\
 $a(t)=0.38+ \dfrac{|sin(t)+sin (\pi t)|}{400} + \dfrac{ \pi\omega(t)}{800}$, $b_1(t) =1+\dfrac { sin^2( t) + sin^2(\sqrt{2}t)}{10}+ \dfrac{1}{100(1+t^2)},$ \\
 \\
 $ \tau_1(t) = cos^2(t)  + cos^2 (\sqrt{2}t) + 1 +e^{-t^2}, \text{}H(t,x)=0.01|sin(t)+cos(\sqrt{3}t)| \dfrac{|x|}{1+x^2},\text{ et }\sigma(t)=|sin(t)-sin(\pi t)|.$\\
 \\
Clearly, $a^+=0.39, a^-=0.38,  b^+=1.21, b^-=1, H^+=5*10^{-3}, H^-=0, r=4.$\\
\\
It is not difficult to see that  H $\in PAP_U(\R \times \R,\R^+)$ and satisfies Lipschitz condition with   $l=10^{-2}$.\\
\\Let k=2, M=3.29. We obtain easily:
\begin{enumerate}
\item[[H1]] $0<2<3.29;$
\item[[H2]] ${\displaystyle -a^- M+\sum^N_{i=1}b_i^+ - H^-}= -0.04 < 0$;
\item[[H3]] ${\displaystyle -a^+ k+\sum^N_{i=1}b_i^- \dfrac{k^m}{1+k^n} -H^+}= 0.015> 0$;
\item[[H4]] ${\displaystyle \underset{t\in \mathbb{R} }{sup}\bigg\{-a(t)+\sum^N_{i=1}b_i(t)\bigg[\dfrac{(n-m)}{4}+\dfrac{m}{(1+k^n)^2}\bigg]M^{m-1}+L\bigg\}}$=   -0.0515<0.
\end{enumerate}

Then, all the conditions in Theorem  {\color{blue}1} et {\color{blue}2} are satisfied, Therefore, there exists a unique  pseudo almost periodic solution $x^*$ in $\mathcal{B}=\{\phi \in PAP(\R,\R); k\leq \phi (t) \leq M,\text{} \forall t\in \R\}$ which is  global attractor. \\
\begin{figure}[hbtp]
\includegraphics[scale=0.7]{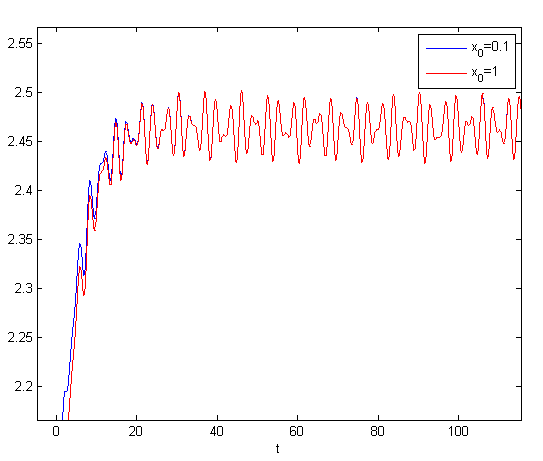}
\caption{The numerical solution  $x_t(t_0,2)$ of the example 6.1 for $x_0=0.1$ and $x_0=1$}
\end{figure}
\begin{remark}
Notice that in vue of this above example, it follows that the condition of proposition 4.2 is necessary. Besides, The results are verified by the numerical simulations in fig(2).
\end{remark}
\section{Conclusion}
\label{sec:6}
In this paper, some new conditions were given ensuring the existence of the uniqueness positive pseudo almost periodic solution of the hematopoies model with mixed delays and with a non-linear harvesting term (which is more realistic).\\
Also, the global attractivity of the unique pseudo almost periodic solution of the considered model is demonstrated by a new and suitable Lyapunov function.\\
\\As the best of our knowledge, this is the first paper considering such solutions for this generalized model.\\
Notice that our approach can be applied to the case of the almost automorphic  and pseudo almost automorphic solutions of the considered model.


\begin{thebibliography}{100}
  \bibitem{K1} Liu B., New results on the positive almost periodic solutions for a model of hematopoiesis, Nonlinear Anal. Real World Appl. 17(2014), 252-264.
  \bibitem{D}Zhang C.. Almost Periodic Type Functions and Ergodicity. Kluwer Academic/Science Press: Beijing, 2003.
\bibitem{D1} Zhang C., Pseudo almost periodic solutions of some diffe rential equations
II, J. Math. Anal. Appl., 192(1995), pp. 543-561.
\bibitem{B}Braverman E.,  Saker S.H., Permanence, oscillation and attractivity of the discrete hematopoiesis model with variable coefficients, Nonlinear Anal. Theory Methods Appl. 67 (2007) 2955–2965.
  \bibitem{H1}Chérif F., Analysis of Global Asymptotic Stability and Pseudo Almost Periodic Solution of a Class of Chaotic Neural Networks, Mathematical Modelling and Analysis, 18:4, 489-504.
\bibitem{H2}Chérif F., Pseudo almost periodic solution of Nicholson’s blowflies model with mixed delays, Applied Mathematical Modelling 39 (2015) 5152–5163.
\bibitem{K}Long F. and Yang M. Q. , “Positive periodic solutions of delayed Nicholsons blowies model with a linear harvesting term,” Electronic Journal of Qualitative Thory of Diffrential Equations, vol. 41, pp. 1–11, 2011.
\bibitem{K2} Liu G., Yan J., Zhang F. : Existence and global attractivity of unique positive periodic solution for a model of hematopoiesis. J. Math. Anal. Appl. 334, 157-171 (2007).
\bibitem{O}Zhou H., Wang W., Zhou Z.F., Positive almost periodic solution for a model of hematopoiesis with infinite time delays and a nonlinear harvesting term, Abstr. Appl. Anal. 2013 (2013) 146729.
\bibitem{P} Zhou H., Wang J., and Zhou Z., “Positive almost periodic solution for impulsive Nicholson’s blowfles model with multiple linear harvesting terms,” Mathematical Methods in the Applied Sciences, vol. 36, no. 4, pp. 456–461, 2013.
 \bibitem{G}Ding H.S., Liu Q.L., Nieto J.J.: Existence of positive almost periodic solutions to a class of hematopoiesis model. Appl. Math. Model. 40, 3289-3297 (2016).
\bibitem{H}Ding H.S., N’Guérékata H.M., Nieto J.J., Weighted pseudo almost periodic solutions for a class of discrete hematopoiesis model, Rev. Mat. Complut. 26 (2013)427–443.
\bibitem{B0}Blot J. B., Blot J., N’Guérékata G. M., Pennequin  D. On C(n)- Almost Periodic Solutions to Some Nonautonomous Differential Equations in Banach
Spaces, annales societatis mathematycae polonae, Series I: commentationes mathematicae
roczniki polskiego towarzystwa mathematycznego, Seria I: prace mathematyczne XLVI(2)(2006), 263-273.
\bibitem{M} Meng J., Global exponential stability of positive pseudo-almost-periodic solutions for a model of hematopoiesis, Abstract and Applied Analysis, 2013, Art. ID 463076.
\bibitem{I}Hale JK, Verduyn Lunel SM. Introduction to Functional Differential Equations. Springer-Verlag: New York, 1993.
\bibitem{A} Alzabut J.O.,  Nieto J.J.,  Stamov G.Tr., Existence and exponential stability of positive almost periodic solutions for a model of hematopoiesis, Bound. Value Probl. 1193 (2009) 429–436.
\bibitem{L}Mackey M.C,  Glass L., Oscillation and chaos in physiological control system, Science 197 (1977) 287-289.
\bibitem{M0}Amdouni M., Chérif F., The pseudo almost periodic solutions of the new class of Lotka-Volterra recurrent neural networks with mixed delays,Chaos, Solitons and Fractals 113 (2018) 79–88.
 \bibitem{E}Cieutat P., Fatajou S. et  N'Guérékata G.M., Composition of pseudo almost periodic and pseudo almost automorphic functions and applications to evolution equations. Annales mathématiques Blaise Pascal, tome 6(1999), p. 1-11.
\bibitem{M2}Rihani S., Amor K., Chérif F., Pseudo almost periodic solutions for a Lasota-Wazewska model, Electronic Journal of Differential Equations, Vol. 2016 (2016), No. 62, pp. 1-17.
\bibitem{M1} Saker S.H., Oscillation and global attractivity in hematopoiesis model with periodic coefficients, Appl. Math. Comput. 142 (2003) 477–494.
\bibitem{M3} Diagana T., Pseudo Almost Periodic Functions in Banach Spaces, Nova Science, New York,
2007.
\bibitem{F}Diagana T., Zhou H., Existence of positive almost periodic solutions to the hematopoiesis model. Applied Mathematics and Computation 274 (2016) 644–648.
 \bibitem{B1}Chen X., Hui-Sheng, Ding, Positive Pseudo Almost Periodic Solutions for a Hemathopoies Model, Journal of Nonlinear Evolution Equations and Applications, Volume 2016, Number 2, pp. 25–36 (September 2016)
\bibitem{N}Wang X., Zhang H., A new approach to the existence, nonexistence and uniqueness of positive almost periodic solution for a model of hematopoiesis, Nonlinear Anal. Real World Appl. 11 (1) (2010) 60–66.
\bibitem{N1} Wu X., Li J., Zhou H., A necessary and sufficient condition for the existence of positive periodic solutions of a model of hematopoiesis, Comput. Math. Appl. 54 (6) (2007) 840–849.


 \end{thebibliography}
\end{document}